\theoremstyle{plain}
\newtheorem{theorem}{Theorem}[section]
\newtheorem{lemma}[theorem]{Lemma}
\newtheorem{proposition}[theorem]{Proposition}
\newtheorem{corollary}[theorem]{Corollary}
\theoremstyle{definition}
\newtheorem{definition}[theorem]{Definition}
\newtheorem{example}[theorem]{Example}
\theoremstyle{remark}
\newtheorem*{notation}{Notation}
\numberwithin{equation}{section}
\let\emptyset\varnothing
\title{The Category of Ordered Bratteli Diagrams}
\author{Massoud Amini,  George A. Elliott, and Nasser Golestani}
\address[\textbf{Massoud Amini}]{Department of Pure Mathematics\\ Faculty of Mathematical Sciences\\
  Tarbiat Modares University\\
 Tehran\\ Iran}
 \address{School of Mathematics, Institute for Research in Fundamental Sciences (IPM),
Tehran 19395-5746, Iran}
\email {mamini@modares.ac.ir, mamini@ipm.ir}
\address[\textbf{George A. Elliott}]{Department of Mathematics\\ University of Toronto\\ Toronto, Ontario, Canada\ \ M5S 2E4}
\email {elliott@math.toronto.edu}
\address[\textbf{Nasser Golestani}]{Department of Pure Mathematics\\ Faculty of Mathematical Sciences\\
  Tarbiat Modares University\\
 Tehran\\ Iran}
\email {n.golestani@modares.ac.ir}
\subjclass[2010]{Primary: 37B05, 46M15; secondary: 37A20, 19K14} \keywords{Cantor minimal system,
ordered Bratteli diagram, premorphism,
category, functor, C*-algebra, dimension group, weak orbit equivalence}
\begin{document}
\begin{abstract}
A category structure for ordered Bratteli diagrams is proposed in which
isomorphism coincides with the notion
of equivalence of Herman, Putnam, and Skau.
It is shown that the natural one-to-one correspondence between
the category of Cantor minimal  systems
and the category of simple properly ordered Bratteli diagrams
is in fact an equivalence of categories.
This gives   a \emph{Bratteli--Vershik model} for  factor maps  between  Cantor minimal systems.
We give a construction of factor maps between  Cantor
minimal systems in terms of suitable maps (called premorphisms) between
the corresponding ordered Bratteli diagrams,
and we show that every factor map between two Cantor minimal systems is obtained in this way.
Moreover, solving a natural question, we are able to characterize
Glasner and Weiss's notion of weak orbit equivalence
of Cantor minimal systems  in terms of the corresponding
 C*-algebra crossed products.
\end{abstract}
\maketitle
\vspace{-.5cm}
\section{Introduction}\label{secintro}

In 1972, Bratteli  introduced  what are now called Bratteli diagrams
to study AF~algebras \cite{br72}.
He  associated to each AF algebra an infinite
directed graph (see Definition~\ref{defbd2})
and used these very ef{f}ectively to study (and classify) AF~algebras.
Some attributes of an AF algebra (such as its
 ideal structure) can be read of{f} directly from its Bratteli diagram.

 The second author introduced
 the notion of dimension group and gave a
  classification of AF~algebras using K-theory in 1976 \cite{el76}, showing  that the functor
 $\mathrm{K}_{0}: \mathbf{AF}\to \mathbf{DG}$, from the category of
 AF~algebras with  $*$-homomorphisms
 to the category of scaled dimension groups with order-preserving homomorphisms,
is a strong classification functor
 (see also \cite[Sections 5.1--5.3]{el10}).

Recall that a functor $F:\mathcal{C}\to \mathcal{D}$
is called  a {classification functor} (\cite{el10}) if
$F(a)\cong F(b)$ implies $a\cong b$, for each $a,b\in \mathcal{C}$,
and a {strong classification functor} if each isomorphism from $F(a)$ to $F(b)$
is the image of an isomorphism from $a$ to $b$.

In \cite{aeg14}, the authors introduced  the category $\mathbf{BD}$ of  Bratteli diagrams,
isomorphisms of which coincide
with the  notion of equivalence of Bratteli diagrams introduced
by Bratteli, to capture isomorphism of the corresponding AF~algebras.
We showed that the map $\mathcal{B}:\mathbf{AF}\to \mathbf{BD}$,
defined by Bratteli  in \cite{br72} on objects, is in fact a functor. The fact that this is a
strong classification functor \cite[Theorem~3.11]{aeg14} is a
functorial formulation of Bratteli's classification
of AF~algebras in terms of diagrams, and completes his work
from the classification functor point of view
of \cite{el10}.

Bratteli diagrams have been used to study certain dynamical systems.
In 1981, Vershik used Bratteli diagrams to construct the so-called adic
transformations \cite{ve81a, ve81b}.
Based on his work (and the work of Power \cite{po91}),
Herman, Putnam, and Skau introduced the notion of  ordered Bratteli diagram,
and associated a dynamical system to a  properly ordered Bratteli diagram  \cite{hps92}.
They showed that there is a one-to-one correspondence between
  properly ordered Bratteli diagrams
and essentially minimal totally disconnected
dynamical systems \cite[Theorem~4.7]{hps92}. In particular, each Cantor minimal system has a
Bratteli--Vershik model.
This correspondence was used effectively to study
Cantor minimal  systems and in particular to characterize what they called
strong orbit equivalence in terms of isomorphism of dimension groups and the corresponding C*-algebra
crossed products
 \cite{hps92, gps95, gw95, ma02}.
(Simple orbit equivalence is also characterized in \cite{li05}.
In the present work we  do this for weak orbit equivalence.)

Most of the classi{f}ication results concerning Cantor minimal systems
and their associated ordered Bratteli diagrams and ordered K-groups,
obtained up to now, only deal with   isomorphism classes (see
\cite{gps95,gw95,hps92}).
For instance, in \cite{hps92} Herman, Putnam, and Skau, among other things,
showed that two
Cantor minimal systems $(X,\varphi)$ and $(Y,\psi)$ are
conjugate if and only if their associated ordered Bratteli diagrams
are equivalent. An obvious question is then whether one can realize
factor maps (an important notion
in dynamics) from $(X,\varphi)$ to $(Y,\psi)$ in terms of maps
between the
ordered Bratteli diagrams. In particular, one could ask
if $(Y,\psi)$'s being a factor of $(X,\varphi)$ could be decided by
looking at the corresponding ordered Bratteli diagrams.
Sugisaki in \cite{su11} and Host and Glasner in \cite{gh13}  studied
certain factor maps (for instance, almost one-to-one extensions)
 in terms of dimension groups (see also \cite{gps01} and \cite{DD02}).
The functorial classification approach of \cite{el10} (finding
classification functors---which are possibly full or faithful)
is relevant to this question, as it takes general morphisms into account, and could lead,
at least in certain cases, to a classification of morphisms.
This is the main objective of the current paper.
In particular, we obtain a functor $\mathcal{P}$ from the category of
Cantor minimal systems to the category of ordered Bratteli diagrams
and its inverse functor $\mathcal{V}$, leading to a \emph{model} for factor maps
between Cantor minimal systems. Having a model may have many applications. For instance,
the classical result on the existence of the maximal rational equicontinuous factor for
Cantor minimal systems (Theorem~\ref{thm_odo}),
and the uniqueness of a factor map onto an odometer (Proposition~\ref{prop_odo_uni})
 follow easily from this model. (The characterization of almost one-to-one
 extensions and the study of finite-to-one factor maps will be given
 in the forthcoming paper \cite{GH18}.)

 There is a close relation between Cantor minimal systems and
 certain C*-algebras. Indeed, to every Cantor minimal system
 $(X,\varphi)$ there is associated a C*-algebra crossed product
 $\mathrm{C}(X)\rtimes_{\varphi}\mathbb{Z}$ with the same
 ordered $\mathrm{K}_{0}$-group as that system \cite{hps92}.
One expects that every (equivalence) relation between two Cantor
minimal systems has characterizations in terms of C*-algebra crossed products.
This has been shown already for strong orbit equivalence
by Giordano, Putnam, and Skau in \cite{gps95}, and for
orbit equivalence by Lin in \cite{li05}.
(See also \cite{gps95} for characterizations of
{f}lip conjugacy and Kakutani (strong) orbit equivalence.)
However, no characterization for the  weak orbit equivalence  of Glasner and Weiss in terms of
C*-algebras was  known. We use the notion of
tracial equivalence in the sense of Lin \cite{li05} to achieve this goal (Theorem~\ref{thmwoec}).

The structure of the paper is as follows.
Following on the ideas of \cite{aeg14},
we first propose a notion of  morphism between ordered Bratteli diagrams and
obtain the category $\mathbf{OBD}$ of ordered Bratteli diagrams
(Section~\ref{secpre}). Isomorphism in this category coincides with
 equivalence in the sense of Herman, Putnam, and Skau.
We show that the correspondence obtained by Herman, Putnam, and Skau in \cite{hps92}
is an equivalence of categories. More precisely, for
 the category $\mathbf{SDS}$ of scaled essentially minimal totally disconnected dynamical systems
(Definition~\ref{defSDS}), which contains the category of Cantor minimal systems, we construct a contravariant functor
$\mathcal{P}: \mathbf{SDS}\to\mathbf{OBD}$ (Section~\ref{secp}), leading to what might be
viewed as a model for essentially minimal totally disconnected dynamical
systems and their morphisms. In particular,
the functor $\mathcal{P}$ gives  a Bratteli--Vershik model for factor maps
between Cantor minimal systems, which we then use  in the study of factors  of such systems.

In Section~\ref{secp}, we show that the contravariant functor
$\mathcal{P}: \mathbf{SDS}\to\mathbf{OBD}$
is full  and faithful,
and identify the (essential) range of this functor, as the class  of
  properly ordered Bratteli diagrams
$\mathbf{OBD}_{\mathrm{po}}$. This gives an equivalence of categories
  $\mathcal{P}: \mathbf{SDS}\to\mathbf{OBD}_{\mathrm{po}}$
(Theorem~\ref{threqucat}).
We also construct an  inverse to the functor $\mathcal{P}$, a
 contravariant functor
$\mathcal{V} : \mathbf{OBD}_{\mathrm{po}}\to \mathbf{SDS}$, which, naturally,  is
also an equivalence of categories. This latter functor
 gives us a handle on factor maps between Cantor minimal systems,
by graphically constructing certain arrows (premorphisms) between the associated
ordered Bratteli diagrams.
This is in particular useful when one   applies these
functors to morphisms. In this way, one obtains a functorial formulation
(including general morphisms) of the correspondence of \cite{hps92}
between   properly ordered Bratteli diagrams and
essentially minimal totally disconnected dynamical systems (Theorem~\ref{theequcat2}).

In Section~\ref{seccms},
we apply the results of Section~\ref{secp} to certain subcategories of
$\mathbf{SDS}$.
In particular, we show that
the category of Cantor minimal  systems
 is equivalent to the category of
 what Durand, Host, and Skau called
properly ordered Bratteli diagrams; see \cite{dhs99}  (Corollary~\ref{cormdsc} below).
In Subsection~\ref{subseccms}, we focus on
factors of Cantor minimal systems, to illustrate the use of our functorial machinery.
We give concrete examples of the construction of factor maps
using  premorphisms. In particular,
we reprove---by the technique of premorphisms---the fact that every Cantor minimal
system has a maximal odometer factor (Theorem~\ref{thm_odo}).
In \cite{GH18}  more applications of this technique are given.
Indeed the notion of (ordered) premorphism enables us to
construct desired factor maps by using an explicit graphical method.

In   Section~\ref{secwoe}, we give an equivalent condition---in
terms of the  corresponding C*-algebra crossed products---for the weak orbit equivalence of Glasner and Weiss.

\section{The Category of Ordered Bratteli Diagrams}\label{secpre}

In this section we propose a notion of morphism for the category $\mathbf{OBD}$ of ordered Bratteli diagrams.
This construction is  similar to the construction of the category of Bratteli diagrams,
$\mathbf{BD} $,  given in \cite{aeg14}.
In particular, first we need a notion of
(ordered) premorphism.
 We shall see that  isomorphism in this category coincides with  equivalence of
ordered Bratteli diagrams, as defined by Herman, Putnam, and Skau
in \cite{hps92}.

 Let us first recall  and  fix some notation concerning
 Bratteli diagrams. See \cite{BK16,
 BKM09, Du10, dhs99, hps92, gps95, aeg14}
 for more information about
 (simple and non-simple) Bratteli
 diagrams.

  \begin{definition}\label{defbd2}
A \emph{Bratteli diagram} consists of a vertex set $V$ and an edge set $E$ satisfying
the following conditions. We have a decomposition of $V$ as a disjoint union
$V_{0}\cup V_{1}\cup \cdots$, where each $V_{n}$ is finite and non-empty
and $V_{0}$ has exactly one element, $v_{0}$. Similarly, $E$ decomposes as
a disjoint union $E_{1}\cup E_{2}\cup \cdots$, where each $E_{n}$ is finite
and non-empty. Moreover, we have maps $r,s:E\to V$ such that $r(E_{n})\subseteq V_{n}$
and $s(E_{n})\subseteq V_{n-1}$, $n=1,2,3,\ldots$ ($r=$ range, $s=$ source).
We also assume that $s^{-1}\{v\}$ is non-empty for all $v$ in $V$ and
$r^{-1}\{v\}$ is non-empty for all $v$ in $V\setminus V_{0}$.
 Let us denote such a $B$ by the  diagram

\[
\xymatrix{V_{0}\ar[r]^-{\resizebox{1.15em}{.65em}{$E_{1}$}}
 &V_{1}\ar[r]^-{\resizebox{1.15em}{.65em}{$E_{2}$}}
 &V_{2}\ar[r]^-{\resizebox{1.15em}{.65em}{$E_{3}$}}
 &\cdots\ .
 }
\]
\end{definition}

In the preceding definition, if we fix a total order on
each $V_{n}$, then to each edge set $E_{n}$   a matrix  $\mathrm{M}(E_{n})$ is associated,
called the  \emph{multiplicity matrix} of $E_{n}$ (also  called the ``incidence matrix''  \cite{gps95}).

Let $k,l$ be integers with $0\leq k< l$. Let $E_{k,l}=E_{k+1}\circ E_{k}\circ\cdots\circ E_{l}$
denote the set of all paths from $V_{k}$ to $V_{l}$, that is, the tuples $(e_{k+1},\ldots,e_{l})$ where $ e_{i}\in E_{i},$ for $ i=k+1,\ldots,l,$ with $r(e_{i})=s(e_{i+1})$,
 for $i=k+1,\ldots,l-1$.
In particular,  $E_{k,k}=\{(v,v)\mid v\in V_{k}\}$  is an edge set from $V_{k}$ to itself
We identify $E_{k,k}$ with its multiplicity matrix.

\begin{definition}[\cite{dhs99, BK16, BKM09}]\label{defsbd}
Let  $B=(V,E)$ be a  Bratteli diagram (as in Definition~\ref{defbd2}).
$B$ is called \textit{simple} if there exists a telescoping
$(V',E')$ of $(V,E)$ such that the multiplicity matrices of $(V',E')$
have only non-zero elements at each level. In other words, $B$ is simple if for each
$n\geq 0$ there is $m>n$ such that, for every   $v\in V_{n}$ and  every
$w\in V_{m}$, there is a path in $E_{n,m}$ from $v$ to $w$.
\end{definition}

\begin{definition}\label{defobd}
An \textit{ordered Bratteli diagram} is a Bratteli diagram $(V,E)$ as in
Definition~\ref{defbd2} together with an order relation
$\geq$ on $E$ such that $e,e' \in E$ are comparable if, and only, if $r(e)=r(e')$. In other words,
we have a linear order on each set $r^{-1}\{v\}$, for every $v\in V\setminus V_{0}$.
\end{definition}

If $(V,E,\geq )$ is an ordered Bratteli diagram and $k , l$ are integers with $0\leq k < l$,
 then the set
 $E_{k,l}$
 may be given an induced (lexicographic)
order  \cite{dhs99, hps92}.

For an ordered Bratteli diagram
$(V,E,\geq)$, denote by $E_{\max}$ and  $E_{\min}$
the set of maximal and minimal edges of $E$, respectively.

\begin{definition}\label{defpobd}
Let  $B=(V,E,\geq)$ be an ordered Bratteli diagram. We say that $B$  is  \textit{properly
ordered}
if there are unique infinitely long paths
in $E_{\max}$ and  $E_{\min}$, that is, there is only one sequence
$(e_{1},e_{2},\ldots)$	with each $e_{i}$ in $E_{\max}$
 and $s(e_{i+1})=r(e_{i})$,
for all $i\geq 1$, and the same holds for $E_{\min}$.
\end{definition}

Note that, properly ordered Bratteli diagrams 
(in the sense of the preceding definition) are called
essentially simple in \cite{hps92, dhs99, Du10}.
We use the now standard term
``properly ordered" (see, e.g., \cite{BK16}).

Let us define the category of ordered Bratteli diagrams.
We need a notion of (ordered)
 premorphism before considering the final
notion of morphism.
Denote by $\mathbf{OBD}$ the class of all ordered Bratteli diagrams.

\begin{definition}\label{defpreobd}
Let $B=(V,E,\geq)$ and $C=(W,S,\geq)$
be  ordered Bratteli diagrams.
By an \emph{ordered} \emph{premorphism}
(or just a \emph{premorphism}
if there is no confusion) $f: B\to C$ we mean
 a triple $(F, (f_{n})_{n=0}^{\infty},\geq )$ where
$(f_{n})_{n=0}^{\infty}$ is a cofinal
(i.e., unbounded)
 sequence of  positive integers with
$f_{0}=0\leq f_{1}\leq f_{2}\leq\cdots$,
 $F$ consists of a disjoint union
 $F_{0}\cup F_{1}\cup F_{2}\cup\cdots$ together with a pair of range and source maps
 $r:F\to W$, $s:F\to V$, and
  $\geq$ is a partial order on $F$ such that:

\begin{enumerate}
\item\label{defprebd_it0}
 each $F_{n}$ is a non-empty finite set, $s(F_{n})\subseteq V_{n}$,
 $r(F_{n})\subseteq W_{f_{n}}$,  $F_{0}$
 is a singleton,\\
\hspace*{-1.4cm} $s^{-1}\{v\}$ is non-empty for all $v$ in $V$, and
$r^{-1}\{w\}$ is non-empty for all $w$ in $W$;
\item\label{defpreobd_it1}
$e,e'\in F$ are comparable if and only if $r(e)=r(e')$, and $\geq$ is a linear order
on\\
\hspace*{-1.4cm} $r^{-1}\{w\}$, for
all $w\in W$;

\item\label{defpreobd_it2}
 the diagram of $f:B\to C$,

  \[
\xymatrix{V_{0}\ar[r]^{E_{1}}\ar[d]_{F_{0}}
 &V_{1}\ar[r]^-{E_{2}}\ar[d]_{F_{1}} &V_{2}\ar[r]^-{E_{3}}\ar[d]_{F_{2}} &\cdots \ \ \  \\
 W_{f_{0}}\ar[r]_{S_{f_{0},f_{1}}}
 &W_{f_{1}}\ar[r]_{S_{f_{1},f_{2}}} &W_{f_{2}}\ar[r]_{S_{f_{2},f_{3}}}&\cdots \   ,
 }
\]
\hspace*{-1.4cm} commutes.
The (ordered) commutativity of the diagram of $f$  means that
for each\\
\hspace*{-1.4cm} $n\geq 0$,
 $E_{n+1}\circ F_{n+1}\cong F_{n}\circ S_{f_{n},f_{n+1}}$, i.e.,
 there is a (necessarily unique) bijective map\\
\hspace*{-1.4cm} from $E_{n+1}\circ F_{n+1}$ to
 $F_{n}\circ S_{f_{n},f_{n+1}}$
 preserving the order and intertwining the respective\\
\hspace*{-1.4cm} source and range
maps.
\end{enumerate}
\end{definition}

If $B$ and $C$ in the preceding definition
are  (unordered) Bratteli diagrams then
a pair $f=(F, (f_{n})_{n=0}^{\infty})$
with the properties stated in the preceding definition (without Condition~\eqref{defpreobd_it1})
 is called a \emph{premorphism}
from $B$ to $C$. Note that, in this case,
we require only (unordered) commutativity
of the diagram of $f$,
that is, for each $n\geq 0$, each $v\in V_{n}$, and each $w\in W_{f_{n+1}}$,
 the number
  of paths from $v$ to $w$ passing through $W_{f_{n}}$ and the
 number through $V_{n+1}$ are equal.
 This is equivalent to saying that
  for any positive integer $n$,
    $\mathrm{M}(F_{n+1})\mathrm{M}(E_{n+1})=\mathrm{M}(S_{f_{n},f_{n+1}})\mathrm{M}(F_{n})$.

We remark that the ordered commutativity required in Definition~\ref{defpreobd}
is essential. In fact, if $f$ is a premorphism  (i.e., only unordered commutativity holds), then one obtains a continuous map
between the associated Bratteli compacta.
However, if $f$ is an ordered premorphism  (i.e.,  ordered commutativity holds),
then one gets not only a continuous map but also a homomorphism between
the associated dynamical systems (see Subsection~\ref{subsecv}).
See Figure~\ref{fig_oc} for an illustrative example of  ordered and unordered commutativity.

\begin{figure}
\begin{center}
\begin{tikzpicture}[scale=1.2]
\filldraw (1.2,1.5) circle [radius=0.08];
\filldraw (1.8,1.5) circle [radius=0.08];
\filldraw (1.5,0) circle [radius=0.08];

\filldraw (3.5,1.5) circle [radius=0.08];
\filldraw (3.5,0) circle [radius=0.08];
\draw[->, thick] (1.27,1.6) to [out=30,in=150] (3.38,1.63);
\draw[->, thick] (1.93,1.5) to [out=10,in=170] (3.38,1.5);
\draw[->, thick] (1.65,0) to [out=10,in=170] (3.38,0);
\draw[->, thick] (3.5,1.38) to (3.5,0.13);
\draw[->, thick] (1.22,1.38) to (1.43,0.1);
\draw[->, thick] (1.79,1.38) to (1.57,0.1);

\node at (1.27,0.3) {\tiny{1}};
\node at (1.72,0.3) {\tiny{2}};

\node at (3.2,1.86) {\tiny{2}};
\node at (3.2,1.35) {\tiny{1}};

\filldraw (6.2,1.5) circle [radius=0.08];
\filldraw (6.8,1.5) circle [radius=0.08];
\filldraw (6.5,0) circle [radius=0.08];

\filldraw (8.5,1.5) circle [radius=0.08];
\filldraw (8.5,0) circle [radius=0.08];
\draw[->, thick] (6.27,1.6) to [out=30,in=150] (8.38,1.63);
\draw[->, thick] (6.93,1.5) to [out=10,in=170] (8.38,1.5);
\draw[->, thick] (6.65,0) to [out=10,in=170] (8.38,0);
\draw[->, thick] (8.5,1.38) to (8.5,0.13);
\draw[->, thick] (6.22,1.38) to (6.43,0.1);
\draw[->, thick] (6.79,1.38) to (6.57,0.1);

\node at (6.27,0.3) {\tiny{1}};
\node at (6.72,0.3) {\tiny{2}};

\node at (8.2,1.86) {\tiny{1}};
\node at (8.2,1.35) {\tiny{2}};

\node at (1.19,1.68) {\tiny{$u$}};
\node at (1.8,1.68) {\tiny{$v$}};
\node at (3.5,-.2) {\tiny{$w$}};
\node at (3.5,1.68) {\tiny{$y$}};
\node at (1.5,-.2) {\tiny{$z$}};

\node at (6.19,1.68) {\tiny{$u$}};
\node at (6.8,1.68) {\tiny{$v$}};
\node at (8.5,-.2) {\tiny{$w$}};
\node at (8.5,1.68) {\tiny{$y$}};
\node at (6.5,-.2) {\tiny{$z$}};
\end{tikzpicture}
\end{center}
\caption{The di{f}ference between \emph{unordered} commutativity (left diagram)
and \emph{ordered} commutativity (right diagram). In
both diagrams the number of paths from  $u$ to
$w$ passing through $y$ and the number through $z$
are equal (which is one here), and  the same for paths from $v$ to $w$. However,
in the left diagram the source map is not preserved, since
the first path ending in $w$ and passing through $y$ starts
at $v$ while the first path ending in $w$ and passing through $z$ starts
at $u$. In the right diagram the source map is preserved.
}\label{fig_oc}
\end{figure}

We give an illustrative example of an ordered premorphism  in
Figure~\ref{fig_Toe}. We use thick and
curved arrows to depict the edges of premorphisms.

\begin{example}\label{exodtoe}
In Figure~\ref{fig_Toe}, a premorphism $f:B\to C$ is depicted where
$B$ is the odometer of type
$(k_{n})_{n=1}^{\infty}$ with $k_{1}=1$ and $k_{n}=3$ for $n\geq 2$
(see Definition~\ref{defodo} below),
and $C$ (with left-to-right order) is a Toeplitz system.
The (ordered)
commutativity needed in Definition~\ref{defpreobd}
can be checked easily at each level. Note that since $B$ has only one vertex at each level,
 ordered commutativity (as in Definition~\ref{defpreobd}) is the same as
commutativity   for $f$.
As we will see in Subsection~\ref{subsecv}, applying the
functor $\mathcal{V}$,
 we get a factor map $\mathcal{V}([f]): \mathcal{V}(C) \to \mathcal{V}(B)$
as defined before Lemma~\ref{lemtech}. In fact, $\mathcal{V}(B)$ is the maximal rational
equicontinuous factor of $\mathcal{V}(C)$ (see Theorem~\ref{thm_odo} below,
and \cite{GJ00}).
\end{example}

\begin{figure}
\begin{center}
\begin{tikzpicture}[scale=1.1]

\filldraw (3,10) circle [radius=0.1];
\filldraw (3,7) circle [radius=0.1];
\filldraw (3,4) circle [radius=0.1];
\filldraw (3,1) circle [radius=0.1];

\draw (3,9.87)--(3,7.14);

\draw (2.9,6.87)--(2.9,4.14);
\draw (3,6.87)--(3,4.14);
\draw (3.1,6.87)--(3.1,4.14);

\draw (2.9,3.87)--(2.9,1.14);
\draw (3,3.87)--(3,1.14);
\draw (3.1,3.87)--(3.1,1.14);

\node at (2.8,4.45) {\tiny{1}};
\node at (3,4.45) {\tiny{2}};
\node at (3.2,4.45) {\tiny{3}};

\node at (2.8,1.45) {\tiny{1}};
\node at (3,1.45) {\tiny{2}};
\node at (3.2,1.45) {\tiny{3}};

\filldraw (7,10) circle [radius=0.1];
 \filldraw (6,7) circle [radius=0.1];
\filldraw (8,7) circle [radius=0.1];
 \filldraw (6,4) circle [radius=0.1];
\filldraw (8,4) circle [radius=0.1];
 \filldraw (6,1) circle [radius=0.1];
\filldraw (8,1) circle [radius=0.1];


\draw (6.94,9.9)--(6.01,7.13);
\draw (7.085,9.915)--(7.97,7.12);

\draw (5.97,6.87)--(5.97,4.14);
\draw (5.97,3.87)--(5.97,1.14);
\draw (6.06,6.87)--(6.06,4.14);
\draw (6.06,3.87)--(6.06,1.14);

\draw (7.98,6.87)--(7.98,4.14);
\draw (7.98,3.87)--(7.98,1.14);
\draw (8.06,6.87)--(8.06,4.14);
\draw (8.06,3.87)--(8.06,1.14);

\draw (7.9,6.9)--(6.08,4.1);
\draw (7.9,3.9)--(6.08,1.1);
\draw (6.1,6.9)--(7.95,4.15);
\draw (6.1,3.9)--(7.95,1.15);

\node at (8.15,4.45) {\tiny{3}};
\node at (7.9,4.45) {\tiny{2}};
\node at (7.65,4.45) {\tiny{1}};
\node at (6.14,4.45) {\tiny{2}};
\node at (5.91,4.45) {\tiny{1}};
\node at (6.42,4.45) {\tiny{3}};
\node at (8.15,1.45) {\tiny{3}};
\node at (7.9,1.45) {\tiny{2}};
\node at (7.65,1.45) {\tiny{1}};
\node at (6.14,1.45) {\tiny{2}};
\node at (5.91,1.45) {\tiny{1}};
\node at (6.42,1.45) {\tiny{3}};

\draw[->, thick] (3.1,10.1) [out=10,in=170] to (6.9,10.1);

\draw[->, thick] (3.1,7.1) [out=10,in=170] to (5.9,7.1);
\draw[->, thick] (3.1,7.15) [out=30,in=150] to (7.9,7.1);

\draw[->, thick] (3.15,4.1) [out=10,in=170] to (5.9,4.1);
\draw[->, thick] (3.15,4.15) [out=30,in=150] to (7.9,4.1);

\draw[->, thick] (3.15,1.1) [out=10,in=170] to (5.9,1.1);
\draw[->, thick] (3.15,1.15) [out=30,in=150] to (7.9,1.1);
\node[very thick] at (3,0.2) {\vdots};
\node[very thick] at (7,0.2) {\vdots};
\node at (3,10.7) {$B$};
\node at (7,10.7) {$C$};
\node at (5,10.9) {$f$};
\draw[->] (4,10.7) to (6,10.7);
\end{tikzpicture}
\end{center}
\caption{An  ordered premorphism $f$
from the ordered Bratteli diagram of an odometer $B$ to  that of a
Toeplitz system $C$ (see Example~\ref{exodtoe}).}\label{fig_Toe}
\end{figure}

In a way similar to \cite{aeg14},
we define an isomorphism relation on the class of ordered premorphisms
and we define the composition of two ordered premorphisms.

\begin{definition}\label{defisopre}
Let $B,C\in \mathbf{OBD}$ and let
$f,f':B\to C$ be a pair of ordered premorphisms where
$f=(F,(f_{n})_{n=0}^{\infty},\geq)$ and
$f'=(F',(f'_{n})_{n=0}^{\infty}, \geq')$.
We shall say that $f$ is \textit{isomorphic} to $f'$, and write
$f\cong f'$, if $f_{n}=f'_{n}$, $n\geq 0$, and there is a bijective
map from $F$ to $F'$, preserving the order
and the range and source maps.
This is an equivalence relation on the class of ordered
premorphisms from $B$ to $C$.
We denote the equivalence class of $f$  by
$\overline{f}$.
Let $B$, $C$, and $D$ be objects in $\mathbf{OBD}$ and
let $f:B\to C$ and $g:C\to D$ be
ordered premorphisms;
$f=(F,(f_{n})_{n=0}^{\infty}, \geq)$,
$g=(G,(g_{n})_{n=0}^{\infty}, \geq)$,
where
$F=\bigcup_{n=0}^{\infty}F_{n}$ and
$G=\bigcup_{n=0}^{\infty}G_{n}$ (disjoint unions).
The \emph{composition} of $f$ and $g$ is defined as
$gf=(H,(h_{n})_{n=0}^{\infty},\geq)$, where $h_{n}=g_{f_{n}}$,
$H=\bigcup_{n=0}^{\infty}H_{n}$, and
$H_{n}=F_{n}\circ G_{f_{n}}$, $n\geq 0$ (i.e., the set of all paths from $s(F_{n})$ to
$r(G_{f_{n}})$. The partial order $\geq$
on $H$ is the induced lexicographic order.
Also, set
$\overline{g}\overline{f}=\overline{gf}$.
\end{definition}

It is not hard to see that
the class $\mathbf{OBD}$, with  ordered  premorphisms
modulo the relation of isomorphism (see above)
is a category.
We shall refer to this as the category of ordered Bratteli diagrams
with ordered premorphisms.
 Two ordered Bratteli diagrams are isomorphic  in the category $\mathbf{OBD}$ with (ordered)
premorphisms
if, and only if, they are isomorphic  in the sense of Herman, Putman, and Skau
(\cite{hps92}).

We  define an equivalence relation on ordered premorphisms.

\begin{definition}\label{defeq}
Let $B, C$ be ordered Bratteli diagrams
and let $f,g: B\to C$ be ordered premorphisms
with $B=(V,E,\geq)$, $C=(W,S,\geq)$,
$f=(F,(f_{n})_{n=0}^{\infty},\geq)$, and
$g=(G,(g_{n})_{n=0}^{\infty},\geq)$.
We shall say that $f$ is \emph{equivalent} to $g$, and write $f\sim g$,
if there are sequences $(n_{k})_{k=1}^{\infty}$ and $(m_{k})_{k=1}^{\infty}$ of
positive integers such that $n_{k}<m_{k}<n_{k+1}$ and $f_{n_{k}}<g_{m_{k}}<f_{n_{k+1}}$
for each $k\geq 1$, and the diagram

\[
\xymatrix{V_{n_{\resizebox{.28em}{.28em}{1}}}
\ar[r]\ar[d]_{F_{n_{\resizebox{.26em}{.26em}{1}}}}
 & V_{m_{\resizebox{.28em}{.28em}{1}}}
 \ar[r]\ar[d]_{G_{m_{\resizebox{.26em}{.26em}{1}}}}  &
 V_{n_{\resizebox{.28em}{.28em}{2}}}
 \ar[r] \ar[d]_{F_{n_{\resizebox{.26em}{.26em}{2}}}}
 & V_{m_{\resizebox{.28em}{.28em}{2}}}
 \ar[r]\ar[d]_{G_{m_{\resizebox{.26em}{.26em}{2}}}} &\cdots \\
 W_{f_{n_{\resizebox{.26em}{.26em}{1}}}}\ar[r]
 &W_{g_{m_{\resizebox{.26em}{.26em}{1}}}}\ar[r] &
 W_{f_{n_{\resizebox{.26em}{.26em}{2}}}}\ar[r]
 & W_{g_{m_{\resizebox{.26em}{.26em}{2}}}}\ar[r]&\cdots
 }
\]

\noindent is (ordered) commutative, i.e., each minimal square commutes: for each $k\geq 1$,
\begin{equation*}
E_{n_{k},m_{k}}\circ G_{m_{k}}\cong F_{n_{k}}\circ S_{f_{n_{k}},g_{m_{k}}},
\end{equation*}
\begin{equation*}
   E_{m_{k},n_{k+1}}\circ F_{n_{k+1}}\cong
G_{m_{k}}\circ S_{g_{m_{k}},f_{n_{k+1}}}.
\end{equation*}
\end{definition}

It is easily checked that $\sim$ is an equivalence relation on the class of
 ordered premorphisms from $B$ to $C$.
Let us call the equivalence classes  \textit{ordered morphisms},
or if there is no confusion, just \textit{morphisms}, in  $\mathbf{OBD}$.	
We shall denote the equivalence class of an
ordered premorphism
$f:B\to C$  by $[f]:B\to C$, or, if there is  no confusion, just   by $f$.

The composition of morphisms $[f]:B\to C$ and $[g]:C\to D$ is  defined as
$[gf]:B\to D$ where $gf$ is the composition of ordered premorphisms
(see Definition~\ref{defisopre}).
This composition is
well~defined. The first statement of
the next result is proved in a way
similar to the proof
 of \cite[Theorem~2.7]{aeg14}.
 The second statement is easy to prove.

\begin{proposition}\label{thrcatobd}
The class $\mathbf{OBD}$, with
(ordered) morphisms as defined above, is a category. Two ordered Bratteli diagrams are isomorphic  in this category
if and only if they are equivalent  in the sense of Herman, Putnam, and Skau.
\end{proposition}

Let us refer to the category $\mathbf{OBD}$ with (ordered)
morphisms as defined above as the
category of ordered Bratteli diagrams.

We shall now give  two  alternative formulations of the definition of equivalence for premorphisms
(Definition~\ref{defeq}).
The first one will be  used in a number of places later.

\begin{definition}\label{defeq2}
Let $f,g: B\to C$ be ordered premorphisms in
$\mathbf{OBD}$, with $B=(V,E,\geq)$, $C=(W,S,\geq)$,
$f=(F,(f_{n})_{n=0}^{\infty},\geq)$, and
$g=(G,(g_{n})_{n=0}^{\infty},\geq)$.
We shall say that $f$ is \emph{equivalent} to $g$, in the second sense,
 if for each $n\geq 0$ there is an $m\geq f_{n},g_{n}$ such that
 $F_{n}\circ S_{f_{n},m}\cong G_{n}\circ S_{g_{n},m}$, and \emph{equivalent} to $g$, in the third sense,
 if for each $n\geq 0$ and for each $k\geq n$, there is an
 $m\geq f_{n},g_{k}$ such that $F_{n}\circ S_{f_{n},m}\cong E_{n,k}\circ G_{k}\circ S_{g_{k},m}$.
\end{definition}

Using an analogue of \cite[Proposition~2.11]{aeg14}, one can see that
Definitions~\ref{defeq} and  \ref{defeq2}  are
 equivalent.

It might be noted that the category of Bratteli diagrams could also be
described in terms of the general category construction of inductive limits starting from
single-step Bratteli diagrams (see, e.g., \cite{gro}).

\begin{figure}
\begin{center}
\begin{tikzpicture}[scale=1.5]

\filldraw (2,10) circle [radius=0.1];
\filldraw (0,7) circle [radius=0.1];
\filldraw (2,7) circle [radius=0.1];
\filldraw (4,7) circle [radius=0.1];
\filldraw (0,4) circle [radius=0.1];
\filldraw (2,4) circle [radius=0.1];
\filldraw (4,4) circle [radius=0.1];
\filldraw (0,1) circle [radius=0.1];
\filldraw (2,1) circle [radius=0.1];
\filldraw (4,1) circle [radius=0.1];

\draw (1.9,9.9)--(0,7.12);
\draw (2.1,9.9)--(3.91,7.1);
\draw (2,9.87)--(2,7.14);
\draw (0,6.87)--(0,4.14);
\draw (0,3.87)--(0,1.14);
\draw (2,6.87)--(2,4.14);
\draw (2,3.87)--(2,1.14);
\draw (3.94,6.87)--(3.94,4.14);
\draw (3.94,3.87)--(3.94,1.14);
\draw (4.06,6.87)--(4.06,4.14);
\draw (4.06,3.87)--(4.06,1.14);
\draw (1.9,6.9)--(0.08,4.1);
\draw (1.9,3.9)--(0.08,1.1);
\draw (3.9,6.9)--(2.08,4.1);
\draw (3.9,3.9)--(2.08,1.1);
\draw (2.1,6.9)--(3.91,4.1);
\draw (2.1,3.9)--(3.91,1.1);
\draw (0.1,6.9)--(1.91,4.1);
\draw (0.1,3.9)--(1.91,1.1);
\draw (0.15,6.95)--(3.87,4.05);
\draw (0.15,3.95)--(3.87,1.05);

\node at (-0.08,4.35) {\tiny{1}};
\node at (0.15,4.35) {\tiny{2}};
\node at (1.63,4.35) {\tiny{1}};
\node at (1.93,4.35) {\tiny{2}};
\node at (2.15,4.35) {\tiny{3}};
\node at (3.5,4.23) {\tiny{1}};
\node at (3.66,4.35) {\tiny{2}};
\node at (3.88,4.35) {\tiny{3}};
\node at (4.15,4.35) {\tiny{4}};

\node at (-0.08,1.35) {\tiny{1}};
\node at (0.15,1.35) {\tiny{2}};
\node at (1.63,1.35) {\tiny{1}};
\node at (1.93,1.35) {\tiny{2}};
\node at (2.15,1.35) {\tiny{3}};
\node at (3.5,1.23) {\tiny{1}};
\node at (3.66,1.35) {\tiny{2}};
\node at (3.88,1.35) {\tiny{3}};
\node at (4.15,1.35) {\tiny{4}};

\filldraw (8,10) circle [radius=0.1];
 \filldraw (7,7) circle [radius=0.1];
\filldraw (9,7) circle [radius=0.1];
 \filldraw (7,4) circle [radius=0.1];
\filldraw (9,4) circle [radius=0.1];
 \filldraw (7,1) circle [radius=0.1];
\filldraw (9,1) circle [radius=0.1];

\draw (7.885,9.96)--(6.92,7.1);
\draw (7.91,9.92)--(6.96,7.11);
\draw (7.94,9.9)--(7.01,7.13);
\draw (7.98,9.88)--(7.05,7.13);
\draw (8.025,9.88)--(7.08,7.1);

\draw (8.06,9.88)--(8.93,7.11);
\draw (8.085,9.915)--(8.97,7.12);
\draw (8.115,9.95)--(9.01,7.13);
\draw (8.14,10)--(9.06,7.11);

\draw (6.98,6.87)--(6.98,4.14);
\draw (6.98,3.87)--(6.98,1.14);
\draw (7.06,6.87)--(7.06,4.14);
\draw (7.06,3.87)--(7.06,1.14);

\draw (8.98,6.87)--(8.98,4.14);
\draw (8.98,3.87)--(8.98,1.14);
\draw (9.06,6.87)--(9.06,4.14);
\draw (9.06,3.87)--(9.06,1.14);

\draw (8.9,6.9)--(7.08,4.1);
\draw (8.9,3.9)--(7.08,1.1);
\draw (7.1,6.9)--(8.95,4.15);
\draw (7.1,3.9)--(8.95,1.15);

\node at (9.15,4.7) {\tiny{3}};
\node at (8.9,4.7) {\tiny{2}};
\node at (8.67,4.7) {\tiny{1}};
\node at (7.14,4.7) {\tiny{2}};
\node at (6.92,4.7) {\tiny{1}};
\node at (7.35,4.65) {\tiny{3}};
\node at (9.15,1.7) {\tiny{3}};
\node at (8.9,1.7) {\tiny{2}};
\node at (8.67,1.7) {\tiny{1}};
\node at (7.14,1.7) {\tiny{2}};
\node at (6.92,1.7) {\tiny{1}};
\node at (7.35,1.65) {\tiny{3}};

\draw[->, thick] (2.1,10.1) to [out=10,in=170] (7.9,10.1);
\draw[->, thick] (0.15,7.05) to [out=30,in=125] (6.9,7.15);
\draw[->, thick] (0.13,7.1) to [out=30,in=125] (6.9,7.28);
\draw[->, thick] (2.15,7.1) to [out=30,in=155] (6.85,7.05);
\draw[->, thick] (2.15,7.04) to [out=30,in=155] (6.85,6.95);
\draw[->, thick] (4.14,7.01) to [out=-15,in=190] (6.85,6.85);

\draw[->, thick] (0.16,7.18) to [out=32,in=145] (8.85,7.15);
\draw[->, thick] (2.15,7.15) to [out=32,in=145] (8.85,7.05);
\draw[->, thick] (4.12,6.95) to [out=-20,in=200] (8.86,6.96);
\draw[->, thick] (4.11,6.88) to [out=-20,in=205] (8.84,6.87);
\draw[->, thick] (0.15,4.05) to [out=30,in=125] (6.9,4.15);
\draw[->, thick] (0.13,4.1) to [out=30,in=125] (6.9,4.28);
\draw[->, thick] (2.15,4.1) to [out=30,in=155] (6.85,4.05);
\draw[->, thick] (2.15,4.04) to [out=30,in=155] (6.85,3.95);
\draw[->, thick] (4.14,4.01) to [out=-15,in=190] (6.85,3.85);

\draw[->, thick] (0.16,1.18) to [out=32,in=145] (8.85,1.15);
\draw[->, thick] (2.15,1.15) to [out=32,in=145] (8.85,1.05);
\draw[->, thick] (4.12,0.95) to [out=-20,in=200] (8.86,0.96);
\draw[->, thick] (4.11,0.88) to [out=-20,in=205] (8.84,0.87);
\draw[->, thick] (0.15,1.05) to [out=30,in=125] (6.9,1.15);
\draw[->, thick] (0.13,1.1) to [out=30,in=125] (6.9,1.28);
\draw[->, thick] (2.15,1.1) to [out=30,in=155] (6.85,1.05);
\draw[->, thick] (2.15,1.04) to [out=30,in=155] (6.85,0.95);
\draw[->, thick] (4.14,1.01) to [out=-15,in=190] (6.85,0.85);

\draw[->, thick] (0.16,4.18) to [out=32,in=145] (8.85,4.15);
\draw[->, thick] (2.15,4.15) to [out=32,in=145] (8.85,4.05);
\draw[->, thick] (4.12,3.95) to [out=-20,in=200] (8.86,3.96);
\draw[->, thick] (4.11,3.88) to [out=-20,in=205] (8.84,3.87);

\node at (6.85,7.5) {\tiny{1}};
\node at (6.7,7.3) {\tiny{3}};
\node at (6.52,7.3) {\tiny{2}};
\node at (6.5,7) {\tiny{4}};
\node at (6.7,6.72) {\tiny{5}};

\node at (8.62,7.4) {\tiny{1}};
\node at (8.45,7.2) {\tiny{2}};
\node at (8.6,6.95) {\tiny{3}};
\node at (8.65,6.7) {\tiny{4}};
\node at (6.85,4.5) {\tiny{1}};
\node at (6.7,4.3) {\tiny{3}};
\node at (6.52,4.3) {\tiny{2}};
\node at (6.5,4) {\tiny{4}};
\node at (6.7,3.72) {\tiny{5}};

\node at (8.62,4.4) {\tiny{1}};
\node at (8.45,4.2) {\tiny{2}};
\node at (8.6,3.95) {\tiny{3}};
\node at (8.65,3.7) {\tiny{4}};
\node at (6.85,1.5) {\tiny{1}};
\node at (6.7,1.3) {\tiny{3}};
\node at (6.52,1.3) {\tiny{2}};
\node at (6.5,1) {\tiny{4}};
\node at (6.7,0.72) {\tiny{5}};

\node at (8.62,1.4) {\tiny{1}};
\node at (8.45,1.2) {\tiny{2}};
\node at (8.6,0.95) {\tiny{3}};
\node at (8.65,0.7) {\tiny{4}};

\node[very thick] at (2,0.2) {\vdots};
\node[very thick] at (8,0.2) {\vdots};
\node at (2,10.7) {$B=(V,E,\geq)$};
\node at (8,10.7) {$C'=(W',S',\geq')$};
\node at (5,10.9) {$f=(F, (n)_{n=0}^{\infty},\geq )$};
\draw[->] (3,10.7) to (6.8,10.7);
\end{tikzpicture}
\end{center}
\caption{An ordered premorphism $f$ from $B$ to $C$.
 See Example~\ref{exa_Chacon}.}\label{fig_Chacon}
\end{figure}

We close this section with  another illustrative example of an ordered premorphism $f$.
We will  construct the inverse of the morphism $[f]$ in Example~\ref{exa_Chacon2}.
Thus, $[f]$ is an isomorphism in the category $\mathbf{OBD}$.

\begin{example}\label{exa_Chacon}
Consider  the Chacon substitution system $(X,\varphi)$
 described in \cite{GJ00}, i.e.,
 the substitution minimal system associated to the 
 Chacon substitution $0\to 0010$, $1\to 1$.
Let $C=(W,S,\geq)$ be the Bratteli--Vershik model for $(X,\varphi)$
as explained in \cite[Section~4.2]{GJ00}. The diagram $C$ is
drawn on the left in  Figure~\ref{fig_Chacon2}, below.
Let $C'=(W',S',\geq')$ be the  telescoping  to the sequence $0,2,3,4,\ldots$ of  $C$.
The diagram $C'$ is drawn on the right in Figure~\ref{fig_Chacon}.
Let $B=(V,E,\geq)$ be the properly ordered Bratteli diagram drawn on the left in Figure~\ref{fig_Chacon}.
One can check that $f:B\to C$ in Figure~\ref{fig_Chacon} is an ordered premorphism.
In the notation of Definition~\ref{defpreobd}, the multiplicity matrices
are the following:
\[
\mathrm{M}(E_{1})
=\left(
 \begin{smallmatrix}
 1  \\
 1 \\
 1
 \end{smallmatrix}
 \right), \
\mathrm{M}(E_{n})
=\left(
 \begin{smallmatrix}
 1 & 1 & 0 \\
 1 & 1 & 1\\
 1 & 1 & 2
 \end{smallmatrix}
 \right), \
 \mathrm{M}(S_{1}')
=\left(
 \begin{smallmatrix}
 5  \\
 4
 \end{smallmatrix}
 \right), \
\mathrm{M}(S_{n}')
=\left(
 \begin{smallmatrix}
 2 & 1  \\
 1 & 2
 \end{smallmatrix}
 \right),\  \text{for}\ n\geq 2;
 \]
 \[
  \mathrm{M}(F_{0})=(1),\
 \mathrm{M}(F_{n})
=\left(
 \begin{smallmatrix}
 2 & 2 & 1 \\
 1 & 1 & 2
 \end{smallmatrix}
 \right),
 \  \text{for}\ n\geq 1.
\]
In fact, these two diagrams are equivalent (see \cite[Section~4.2]{GJ00}).
To show this, we will construct the inverse of the morphism $[f]$ in Example~\ref{exa_Chacon2}.
\end{example}

\section{Functors between  Categories of Dynamical Systems and Bratteli Diagrams}\label{secp}

In this section,
we shall construct two (contravariant) functors,
$\mathcal{P}:\mathbf{SDS}\to \mathbf{OBD}_{\mathrm{po}}$
and $\mathcal{V} : \mathbf{OBD}_{\mathrm{po}}\to \mathbf{SDS}$,
which are equivalences of categories and are inverse to each other.
In particular, the functor $\mathcal{P}$ provides a model
for factor maps between Cantor minimal systems and the
functor $\mathcal{V}$ provides a method to construct factor maps
between two Cantor minimal systems by drawing suitable arrows
(i.e., ordered premorphisms) between their ordered Bratteli diagrams.

\subsection{The functor $\mathcal{P}$ from $\mathbf{SDS}$ to $\mathbf{OBD}$}
\noindent
In this subsection, we define the category of scaled essentially
minimal totally disconnected dynamical systems $\mathbf{SDS}$, and construct
a functor
$\mathcal{P}:\mathbf{SDS}\to \mathbf{OBD}_{\mathrm{po}}$ which is an
  equivalence of categories (Theorem~\ref{threqucat}).

Here we are mainly interested in  minimal (totally disconnected) systems,
but almost all the results hold in
a more general setting, namely, essentially minimal totally disconnected systems.
The minimal case will be discussed
more specifically in Section~\ref{seccms}.

\begin{definition}[cf.~\cite{hps92}, Definition~1.2]\label{defemds}
Let $X$ be a metrizable compact   space, let
$\varphi$ be a homeomorphism of $X$, and let $x_{0}\in X$. The triple $(X,\varphi,x_{0})$
is called an \emph{essentially minimal dynamical system} if the dynamical system
$(X,\varphi)$ has a unique minimal (non-empty, closed, invariant) subset $Y$ and $x_{0}\in Y$.
\end{definition}

Recall that if moreover   $X$ is totally disconnected and has no isolated points, then $X$ is homeomorphic to
the Cantor set. There are of course essentially minimal  totally disconnected
dynamical systems which are not minimal.
For example, the one-point compactification of a locally compact
non-compact Cantor minimal system
is essentially minimal but not minimal (\cite{ma02}).

\begin{definition}\label{defcatds}
Let us define the category $\mathbf{DS}$ of essentially minimal  totally disconnected dynamical systems
as follows. The objects of this category are  essentially minimal  totally disconnected dynamical systems.
Let $(X,\varphi,x_{0})$ and $(Y,\psi,y_{0})$ be in $\mathbf{DS}$. By a morphism
$\alpha :(X,\varphi,x_{0})\to (Y,\psi,y_{0})$ in $\mathbf{DS}$ we shall mean a homomorphism from
the dynamical system $(X,\varphi)$ to $(Y,\psi)$ (i.e., a continuous map
$\alpha : X \to Y$
with $\alpha\circ \varphi=\psi\circ \alpha$) such that $\alpha(x_{0})=y_{0}$.
\end{definition}

Note that in the definition above, $\alpha$ maps the unique minimal subset of $(X,\varphi)$ to
that of $(Y,\psi)$. Also, isomorphism in the category $\mathbf{DS}$ coincides with
pointed topological conjugacy introduced in \cite{hps92}.
We recall the notion of a Kakutani--Rokhlin partition \cite{hps92}.

\begin{definition}\label{defkrp}
Let $(X,\varphi, x_{0})$ be an essentially minimal  totally disconnected dynamical system.
A \emph{Kakutani--Rokhlin partition} for $(X,\varphi, x_{0})$ is a partition $P$ of $X$ where
\begin{equation*}
P=\{Z(k,j) \mid k=1,\ldots,K,\  j=1,\ldots, J(k)\},
\end{equation*}
in which $K$ and $J(1),\ldots, J(K)$ are non-zero positive  integers and the $Z(k,j)$ are non-empty clopen
subsets of $X$ with the following properties:
\begin{enumerate}
\item\label{defkrp_it1}
$\varphi(Z(k,j))=Z(k,j+1)$ for all  $1\leq k\leq K$, and $1\leq j<J(k)$;
\item\label{defkrp_it2}
setting $Z=\bigcup_{k}Z(k, J(k))$, one has $x_{0} \in Z$ and
$\varphi (Z)=\bigcup_{k}Z(k,1)$.
\end{enumerate}
For each $1\leq k\leq K$, the set $\{Z(k,j) \mid   j=1,\ldots, J(k)\}$ is called the $k$th \emph{tower}
of $P$ with \textit{height} $J(k)$. The sets  $Z$ and $\varphi(Z)$  are called
 the \emph{top} and  \emph{base} of $P$, respectively.
\end{definition}

The following definition was used implicitly in \cite{hps92}.

\begin{definition}\label{defskrp}
Let $(X,\varphi, x_{0})$ be an essentially minimal totally disconnected dynamical system.
A \emph{system of Kakutani--Rokhlin partitions} for $(X,\varphi, y)$ is a sequence
$({P}_{n})_{n=0}^{\infty}$ of Kakutani--Rokhlin partitions for $X$   such that $P_{0}=\{X\}$ and:
\begin{enumerate}
\item\label{defskrp_it1}
if $Z_{n}$ denotes the top of $P_n$ for each $n\geq 1$,
the sequence $(Z_{n})_{n=1}^{\infty}$ is a decreasing\\
\hspace*{-1.4cm}
sequence of clopen sets with intersection $\{x_{0}\}$;
\item\label{defskrp_it2}
for all $n$, $P_{n}\leq P_{n+1}$, i.e., $P_{n+1}$ is a refinement of $P_n$;
\item\label{defskrp_it3}
$\bigcup_{n=0}^{\infty}P_{n}$ is a basis for the topology of $X$.
\end{enumerate}
\end{definition}

\begin{definition}\label{defSDS}
By a \emph{scaled essentially minimal totally disconnected dynamical system} we mean a quadruple
$(X,\varphi,x_{0},\mathcal{R})$ where $(X,\varphi, x_{0})$ is an essentially minimal totally disconnected
dynamical system and $\mathcal{R}$ is a system of Kakutani--Rokhlin partitions for
$(X,\varphi, x_{0})$. The \textit{category of scaled essentially minimal totally disconnected
dynamical systems} $\mathbf{SDS}$
is the category whose objects are the  essentially minimal totally disconnected dynamical systems and
whose morphisms are as follows.
Let $(X,\varphi,x_{0},\mathcal{R})$ and $(Y,\psi,y_{0},\mathcal{S})$ be in $\mathbf{SDS}$.
By a morphism
$\alpha :(X,\varphi,x_{0},\mathcal{R})\to (Y,\psi,y_{0},\mathcal{S})$ we mean a homomorphism
between the dynamical systems $(X,\varphi)$ and $(Y,\psi)$ (i.e., a continuous map
$\alpha:X\to Y$
with $\alpha\circ \varphi=\psi\circ \alpha$) such that $\alpha(x_{0})=y_{0}$.
\end{definition}

We shall need the following notation in a number of places.

\begin{notation}
Let $(X,\varphi,x_{0})$ be an essentially minimal totally disconnected dynamical system
and $P$ and $Q$ be a pair of Kakutani--Rokhlin partitions for it such that
$P\leq Q$ (i.e., $Q$ is a refinement of $P$) and
the top of $P$ contains the top of $Q$.
Considering the towers of $P$ and $Q$ as vertices,
we shall denote by $E(P,Q)$ the (ordered) edge set from
$P$ to $Q$  defined as follows.
We have an edge in $E(P,Q)$ each time a tower of $Q$ passes
a tower of $P$; explicitly, $E(P,Q)$ contains all elements of the form
$(S,T,k)$ where $S=\{Z_{1},\ldots,Z_{n}\}$ and
$T=\{Y_{1},\ldots,Y_{m}\}$ are towers of $P$ and $Q$, respectively,
and $k$ is a positive (i.e., non-negative) integer such that
$Y_{k+j}\subseteq Z_{j}$ for all $1\leq j\leq n$ (cf.~\cite[Section~4]{hps92}).
Note that $(S,T,k)\in E(P,Q)$ if and only if
$Y_{k+j}\subseteq Z_{j}$ for some $1\leq j\leq n$.
Write $(S,T,k)\leq (S',T',k')$ if $T=T'$ and $k\leq k'$.
This is an order relation on  $E(P,Q)$, which is a total order
on the subset of edges leading to a common vertex.
\end{notation}

We shall  need the following lemma.
This is a topological version of \cite[Lemma~3.4]{aeg14}
(see Definition~\ref{defpreobd} for the notation
$\cong$). The proof is straightforward.

\begin{lemma}\label{lemtop}
Let $(X,\varphi,x_{0})$ be an essentially minimal totally disconnected dynamical system
and let $P_{1}$, $P_{2}$, and $P_{3}$ be Kakutani--Rokhlin partitions
such that $P_{1}\leq P_{2}\leq P_{3}$ and the top of $P_{i}$ contains
the top of $P_{i+1}$, for $i=1,2$.
Then  $E(P_{1},P_{3})\cong E(P_{1},P_{2})\circ E(P_{2},P_{3})$, i.e.,
the following diagram commutes, in the natural sense:

\[
\xymatrix{
P_{1}\ar[rr]^{E(P_{1},P_{2})} \ar[rrd]_{E(P_{1},P_{3})}
&  &P_{2}\ar[d]^{E(P_{2},P_{3})}\\
& & P_{3}\ .
}
\]
\end{lemma}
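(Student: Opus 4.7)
My plan is to exhibit a source-, range-, and order-preserving bijection
\[
\Phi\colon E(P_{1},P_{2})\circ E(P_{2},P_{3})\longrightarrow E(P_{1},P_{3}),\qquad
\Phi\bigl((S,T,k_{1}),(T,U,k_{2})\bigr)=(S,U,k_{1}+k_{2}),
\]
and then invoke Definition~\ref{defpreobd}. Writing the towers involved as $S=\{Z_{1},\ldots,Z_{n}\}$, $T=\{Y_{1},\ldots,Y_{m}\}$, and $U=\{W_{1},\ldots,W_{p}\}$, the chain of inclusions $W_{k_{1}+k_{2}+j}\subseteq Y_{k_{1}+j}\subseteq Z_{j}$ (for $1\le j\le n$) makes $\Phi$ well-defined, and preservation of $r$ and $s$ is then immediate.

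For bijectivity, the key input is that $P_{3}$ refines $P_{2}$, so each level $W_{i}$ of $U$ is contained in a unique element of $P_{2}$, hence in a unique level $Y_{j}$ of a unique tower of $P_{2}$; this already forces injectivity of $\Phi$. For surjectivity, given $(S,U,k)\in E(P_{1},P_{3})$, I would let $T$ be the tower of $P_{2}$ containing $W_{k+1}$, let $k_{2}$ be its position in $U$, and set $k_{1}=k-k_{2}$. A small check is that the block $W_{k+1},\ldots,W_{k+n}$ lies entirely inside $T$: if $\varphi$ exited $T$ at some $W_{k+j}$ with $j<n$, then $Y_{k_{1}+j}$ would lie in the top of $P_{2}$, hence in the top of $P_{1}$ by the standing hypothesis, forcing $Z_{j}$ to be the top level of $S$ and contradicting $j<n$. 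Then $(S,T,k_{1})\in E(P_{1},P_{2})$ follows from $P_{1}\le P_{2}$: both $Z_{j}$ and $Y_{k_{1}+j}$ contain $W_{k+j}\ne\emptyset$, so the unique element of $P_{1}$ containing $Y_{k_{1}+j}$ must be $Z_{j}$.

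The main obstacle is order preservation. Paths with common range $U$ are ordered lexicographically with the last edge dominating, so $\bigl((S,T,k_{1}),(T,U,k_{2})\bigr)<\bigl((S',T',k_{1}'),(T',U,k_{2}')\bigr)$ iff either $k_{2}<k_{2}'$, or $k_{2}=k_{2}'$ and $k_{1}<k_{1}'$. If $k_{2}=k_{2}'$, then $W_{k_{2}+1}\subseteq Y_{1}\cap Y_{1}'$, which (since distinct towers of $P_{2}$ are disjoint) forces $T=T'$, and the ordering reduces to $k_{1}<k_{1}'$, equivalently $k_{1}+k_{2}<k_{1}'+k_{2}'$. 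If $k_{2}<k_{2}'$, then the position intervals $\{k_{2}+1,\ldots,k_{2}+m\}$ and $\{k_{2}'+1,\ldots,k_{2}'+m'\}$ occupied by $T$ and $T'$ within $U$ must be disjoint (any overlap would place some $W_{i}$ in two different levels of $P_{2}$), so $k_{2}+m\le k_{2}'$; combined with $k_{1}\le m-n\le m-1$ this yields $k_{1}+k_{2}<k_{2}'\le k_{1}'+k_{2}'$. The converse implication follows by the same dichotomy, completing the verification that $E(P_{1},P_{3})\cong E(P_{1},P_{2})\circ E(P_{2},P_{3})$.
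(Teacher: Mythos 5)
Your overall strategy is sound and close to the paper's: you verify by hand the concatenation bijection $\Phi\bigl((S,T,k_{1}),(T,U,k_{2})\bigr)=(S,U,k_{1}+k_{2})$, while the paper isolates the same content as the fact that each tower of the finer partition decomposes uniquely into an ordered string of \emph{full} passes of towers of the coarser partition. Your well-definedness, injectivity, and order-preservation arguments are correct.

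The gap is in surjectivity: you never establish that $(T,U,k_{2})$ is in fact an element of $E(P_{2},P_{3})$. Writing $T=\{Y_{1},\ldots,Y_{m}\}$ and $U=\{W_{1},\ldots,W_{p}\}$, membership requires $k_{2}\geq 0$ and $W_{k_{2}+i}\subseteq Y_{i}$ for \emph{all} $1\leq i\leq m$, whereas your ``small check'' only gives these inclusions for $i=k_{1}+1,\ldots,k_{1}+n$, the portion of the pass of $T$ overlapping the block $W_{k+1},\ldots,W_{k+n}$; and the phrase ``let $k_{2}$ be its position in $U$'' already presupposes that $T$ makes a complete pass through $U$ there, which is exactly what must be proved (a priori the pass could be cut off by the bottom or top of $U$, and $k-k_{1}$ could even be negative). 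The repair is the mirror image of the check you did carry out: since the top of $P_{3}$ is contained in the top of $P_{2}$, also the base of $P_{3}$ is contained in the base of $P_{2}$; propagating $W_{k+1}\subseteq Y_{k_{1}+1}$ downward with $\varphi^{-1}$ cannot reach the bottom of $U$ before reaching $Y_{1}$ (if $W_{1}\subseteq Y_{i}$ then $Y_{i}$ meets the base of $P_{2}$, forcing $i=1$), and propagating upward with $\varphi$ cannot reach the top of $U$ before reaching $Y_{m}$ (same argument with tops); this yields $k_{2}\geq 0$ and the inclusions for all $i$. Alternatively you could invoke the criterion recorded in the paper's Notation, that $(T,U,k_{2})\in E(P_{2},P_{3})$ as soon as one of the inclusions holds (still checking $k_{2}\geq 0$). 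With that supplement your proof is complete and is essentially the paper's argument, which packages precisely this full-pass decomposition as its key fact.
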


Now we are ready to define the functor $\mathcal{P}: \mathbf{SDS}\to\mathbf{OBD}$.
(The definition of this functor on  objects was already given in \cite[Section~4]{hps92}.)

Define the contravariant functor $\mathcal{P}: \mathbf{SDS}\to\mathbf{OBD}$
 as follows.
Let $(X,\varphi,x_{0},\mathcal{R})$ be in $\mathbf{SDS}$. Consider
  the ordered
 Bratteli diagram $\mathcal{P}(X,\varphi,x_{0},\mathcal{R})=(V,E,\geq)$
 constructed in \cite[Section~4]{hps92} for $(X,\varphi,x_{0},\mathcal{R})$.
Let $\mathcal{R}$ be as in Definition~\ref{defskrp} and set
$V_{n}=\{(n,T) \mid T \text{ is a tower of } P_{n}\}$,  $n\geq 0$, and
 $V=\bigcup_{n=0}^{\infty}V_{n}$. Set
$E_{n}=\{(n,S,T,k)\mid (S,T,k)\in E(P_{n-1},P_{n})\}$,  $n\geq 1$, and
$E=\bigcup_{n=1}^{\infty}E_{n}$. The  order on $E$ is defined as the union of
orderings on the $E_{n}$ as described
just before Lemma~\ref{lemtop}.

Now let
$(X,\varphi,x_{0},\mathcal{R})$ and  $(Y,\psi,y_{0},\mathcal{S})$ be in
$\mathbf{SDS}$, where $\mathcal{R}=(P_{n})_{n=0}^{\infty}$
and $\mathcal{S}=(Q_{n})_{n=0}^{\infty}$, and let
$\alpha :(X,\varphi,x_{0},\mathcal{R})\to (Y,\psi,y_{0},\mathcal{S})$ be a morphism
in $\mathbf{SDS}$, i.e., a continuous map $\alpha: X \to Y$ with $\alpha(x_{0})=y_{0}$ and
$\alpha\circ \varphi=\psi\circ\alpha$
(no relation to $\mathcal{R}$ and $\mathcal{S}$).
Define the (ordered) premorphism
$f=(F, (f_{n})_{n=0}^{\infty},\geq)$
from
$\mathcal{P}(Y,\psi,y_{0},\mathcal{S})=(W,S,\geq)$ to
$\mathcal{P}(X,\varphi,x_{0},\mathcal{R})=(V,E,\geq)$
as follows. Set $f_{0}=0$ and $F_{0}=\{0\}$, and suppose that
we have chosen $f_{0},f_{1},\ldots,f_{n-1}$ and $F_{0},F_{1},\ldots,F_{n-1}$.
To define $f_{n}$ and $F_{n}$, observe that
since $Q_{n}$ is a Kakutani--Rokhlin partition for $(Y,\psi,y_{0})$, the set $\alpha^{-1}(Q_{n})$
of inverse images of elements of $Q_{n}$ is a
Kakutani--Rokhlin partition for $(X,\varphi,x_{0})$.
By  Properties~\eqref{defskrp_it1} and \eqref{defskrp_it3} of Definition~\ref{defskrp},
there is an integer $f_{n}$ with $f_{n}\geq f_{n-1}$  such that
$\alpha^{-1}(Q_{n})\leq P_{f_{n}}$, the top of
$\alpha^{-1}(Q_{n})$ contains the top of $P_{f_{n}}$, and
the  sequence $(f_{n})_{n=0}^{\infty}$ is cofinal.
Set
\[
F_{n}=\{(n,S,T,k)\mid (\alpha^{-1}(S),T,k)\in E(\alpha^{-1}(Q_{n}), P_{f_{n}})\}.
\]
There is a natural one-to-one correspondence between
$F_{n}$ and
$E(\alpha^{-1}(Q_{n}), P_{f_{n}})$.
Define the order on $F_{n}$ to be the induced order from
$E(\alpha^{-1}(Q_{n}), P_{f_{n}})$.
This makes $F_{n}$ an edge set from $W_{n}$ to $V_{f_{n}}$.

Continuing this procedure, we can obtain a cofinal sequence of
integers $(f_{n})_{n=0}^{\infty}$ with $f_{0}=0\leq f_{1}\leq f_{2}\leq\cdots$
 and
an  edge set
$F=\bigcup_{n=0}^{\infty} F_{n}$ such that each $F_{n}$ is an
edge set from $W_{n}$ to $V_{f_{n}}$. The source and range   maps
are defined in the natural way, i.e., $s(n,S,T,k)=(n,S)$
and $r(n,S,T,k)=(f_{n},T)$.
The  order $\leq$ on $F$ is the union of the  orders on  $F_{n}$.
Now set $f=(F, (f_{n})_{n=0}^{\infty},\geq)$. Applying Lemma~\ref{lemtop},
 we see that $f:(W,S)\to (V,E)$ is an (ordered) premorphism.
Set $\mathcal{P}(\alpha)=[f]$, the equivalence class of
$f$. The following is immediate.

 \begin{proposition}
 The map $\mathcal{P}: \mathbf{SDS}\to\mathbf{OBD}$
 is a  contravariant functor.
\end{proposition}

Next, we show that
any premorphism between the Bratteli diagrams of two
essentially minimal totally disconnected dynamical systems can be lifted  to a homomorphism
between them. 

\begin{theorem}\label{thrful}
 The functor $\mathcal{P}: \mathbf{SDS}\to\mathbf{OBD}$
 is a full and faithful functor.
\end{theorem}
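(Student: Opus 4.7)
The plan is to construct, from a representative ordered premorphism $f = (F, (f_n), \geq)$ of the given morphism $[f]\colon \mathcal{P}(Y,\psi,z,\mathcal{S}) \to \mathcal{P}(X,\varphi,y,\mathcal{R})$, a morphism $\alpha\in\mathbf{SDS}$ from $(X,\varphi,y,\mathcal{R})$ to $(Y,\psi,z,\mathcal{S})$ with $\mathcal{P}(\alpha) = [f]$. Write $\mathcal{R} = \{P_n\}$, $\mathcal{S} = \{Q_n\}$, $\mathcal{P}(X,\varphi,y,\mathcal{R}) = (V,E,\geq)$, and $\mathcal{P}(Y,\psi,z,\mathcal{S}) = (W,S,\geq)$. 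The key move will be to reverse-engineer, at each level $n$, a Kakutani-Rokhlin partition $R_n$ of $X$ from the edges $F_n$ (from $W_n$ to $V_{f_n}$), so that $R_n$ will play the role of $\alpha^{-1}(Q_n)$ before $\alpha$ has been defined.

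The construction of $R_n$ uses the linear ordering on the edges of $F_n$ sharing a common range. For each tower $T = \{Z_1^T,\ldots,Z_{h(T)}^T\}$ of $P_{f_n}$, I list the edges of $F_n$ with range $T$ in increasing order as $e_1 < \cdots < e_r$, with sources $s(e_i) = S^{(i)}\in W_n$. I propose to tile $T$ by these sources: the first $l(S^{(1)})$ levels of $T$ form (under the intended $\beta_n$) a copy of the tower $S^{(1)}$ of $Q_n$, the next $l(S^{(2)})$ levels a copy of $S^{(2)}$, and so on. Assembling these copies across all $T\in V_{f_n}$ yields, for each $S\in W_n$, a $\varphi$-tower $R_n(S,\cdot)$ in $X$ with $\beta_n(R_n(S,j)) = S_j$. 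For these cells to constitute a partition of $X$, I need the tiling identity $\sum_{i=1}^r l(S^{(i)}) = h(T)$ for every $T\in V_{f_n}$; this I plan to prove by induction on $n$, using the ordered commutativity of $f$ at level $n-1$, namely an order-preserving bijection $S_n\circ F_n \cong F_{n-1}\circ E_{f_{n-1}, f_n}$ (and Lemma~\ref{lemtop} for the way the right-hand side splits). The right-hand side manifestly accounts, via the path-to-level correspondence in the ordered Bratteli diagram $(V,E,\geq)$ and the inductively established tiling at level $n-1$, for every level of every tower of $P_{f_n}$.

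Granting the tiling, the same commutativity forces $R_{n+1}\leq R_n$ with $\beta_{n+1}(A)\subseteq\beta_n(B)$ whenever $A\in R_{n+1}$, $B\in R_n$, $A\subseteq B$. For each $x\in X$ the chain $\{\beta_n(R_n(x))\}_n$ is then a decreasing sequence of nonempty clopen subsets of $Y$; since $\bigcup_n Q_n$ is a base for the topology of $Y$, this chain intersects in a singleton which I define to be $\alpha(x)$. Continuity of $\alpha$ is immediate since $\alpha^{-1}(C)$ is a finite union of cells of $R_n$ for each $C\in Q_n$. The equivariance $\alpha\circ\varphi = \psi\circ\alpha$ will hold on $X\setminus\{y\}$ by the within-tower dynamics of each $R_n$ and extend to $y$ by continuity, while $\alpha(y)=z$ will follow because $y$ lies in the top of every $R_n$ (the top of $R_n$ containing that of $P_{f_n}$), forcing $\alpha(y)$ into the top of every $Q_n$, which shrinks to $\{z\}$. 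Finally $\mathcal{P}(\alpha) = [f]$ will follow because $\alpha^{-1}(Q_n) = R_n$ by construction, so the sequence $(f_n)$ is itself admissible in the recipe of Definition~\ref{deffuncop} and the resulting edge sets coincide with $F_n$ on the nose.

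The main obstacle is the tiling identity in the second paragraph: for an abstract ordered premorphism, as opposed to one of the form $\mathcal{P}(\beta)$, the equation $\sum_i l(S^{(i)}) = h(T)$ is not built into the definition and must be extracted from the ordered commutativity. Managing this induction cleanly---tracking how the range-based linear order on $F_n$ interacts with tower heights on both sides, and simultaneously verifying that $\beta_n$ is consistent with refinement---will be the delicate part. Once this combinatorial core is established, the remaining verifications (continuity, equivariance, basepoint, and $\mathcal{P}(\alpha)=[f]$) will proceed via standard density and continuity arguments.
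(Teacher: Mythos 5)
Your proposal follows essentially the same route as the paper's proof: your tiling identity is exactly the paper's observation that $F_n$ ``fills'' each tower of $P_{f_n}$ with towers of $Q_n$ (proved there, too, by induction on $n$ from the commutativity built into the definition of premorphism), and $\alpha$ is then defined by the same nested-intersection recipe, with the same verifications of continuity, equivariance, the base point, and $\mathcal{P}(\alpha)=[f]$ via $\alpha^{-1}(Q_n)\leq P_{f_n}$. One small caveat: since objects of $\mathbf{SDS}$ may have $y$ isolated (e.g.\ finite periodic orbits), ``extend to $y$ by continuity'' is not quite available in general and should be replaced by the paper's direct argument that $\varphi(y)$ lies in the base of each $P_{f_n}$ and $\alpha$ maps bases of the $P_{f_n}$ into bases of the $Q_n$.
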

\begin{proof}
First let us show that $\mathcal{P}$ is full. The idea  is to reverse the procedure described above.
Let $\mathcal{X}_{1}=(X,\varphi,x_{0},\mathcal{R})$ and
$\mathcal{X}_{2}=(Y,\psi,y_{0},\mathcal{S})$
be in $\mathbf{SDS}$ and write
$\mathcal{P}(\mathcal{X}_{1})=(V,E)$ and
$\mathcal{P}(\mathcal{X}_{2})=(W,S)$.
Let $f:(W,S)\to (V,E)$ be an (ordered) premorphism.
We must show that there is a morphism
$\alpha:\mathcal{X}_{1}\to \mathcal{X}_{2}$ with
$\mathcal{P}(\alpha)=[f]$.

Write $f=(F, (f_{n})_{n=0}^{\infty},\geq)$,
$\mathcal{R}=(P_{n})_{n=0}^{\infty}$, and
$\mathcal{S}=(Q_{n})_{n=0}^{\infty}$.
Let $F=\bigcup _{n=0}^{\infty} F_{n}$ denote the decomposition of $F$ according to
Definition~\ref{defpreobd}.
For each $n\geq 0$, $F_{n}$ fills the towers of $P_{f_{n}}$ with the towers of $Q_{n}$;
specifically, let $T$ be a tower of $P_{f_{n}}$. Let $e_{1},e_{2},\ldots, e_{k}$ denote
the edges in $F_{n}$ with range $(f_{n},T)$ and
$e_{1}<e_{2}<\cdots< e_{k}$.
Denote by $S_{i}$  the tower of $Q_{n}$ such that the source of $e_{i}$ is
$(n,S_{i})$, $1\leq i\leq k$. Then the height of $T$ equals the sum of
the heights of  $S_{1},S_{2},\ldots, S_{k}$, since $f$ is a premorphism.

Choose $x$ in $X$. For each $n\geq 0$ there is $A_{n}\in P_{f_{n}}$ such that $x\in A_{n}$.
We have $A_{0}\supseteq A_{1}\supseteq A_{2}\supseteq\cdots$.
Since $X$ is Hausdorff and $\bigcup_{n=0}^{\infty} P_{n}$ is a basis for $X$,
we have $\bigcap_{n=0}^{\infty}A_{n}=\{x\}$.
Fix $n\geq 0$. For $T_{n}$,  the tower of $P_{f_{n}}$ containing $A_{n}$,
by the preceding paragraph, there is a unique tower $S_{n}$ in $Q_{n}$ and a unique element $B_{n}$
in $S_{n}$ which corresponds to $A_{n}$ when $F_{n}$ fills $T_{n}$
by the towers of $Q_{n}$. We may construct $\alpha$ in such a way that
$\alpha(A_{n})\subseteq B_{n}$.
By Definition~\ref{defpreobd}, for each $n\geq 1$,
we have
$E_{n}\circ F_{n}\cong F_{n-1}\circ S_{f_{n-1},f_{n}}$.
Thus, $B_{n}\subseteq B_{n-1}$.
The set
$\bigcap_{n=0}^{\infty}B_{n}$ is a singleton, say with the  element  $\alpha(x)$.
This gives a map $\alpha:X\to Y$.

Our construction yields  $\alpha(A_{n})\subseteq B_{n}$, for $n\geq 0$. From this, it follows that
$\alpha$ is continuous. Let us show that $\alpha(x_{0})=y_{0}$.
Since $y$ is in the top of each $P_{f_{n}}$,
$\alpha(x_{0})$ is in the top of each $Q_{n}$. Now by  Property~\eqref{defskrp_it1} of Definition~\ref{defskrp}
we have $\alpha(x_{0})=y_{0}$.

It is not hard to see that $\alpha\circ \varphi=\psi\circ 	\alpha$. Let $x\in X\setminus \{x_{0}\}$.
Hence, $\alpha:\mathcal{X}_{1}\to \mathcal{X}_{2}$ is a
morphism in $\mathbf{SDS}$ and our construction shows that
$\alpha^{-1}(Q_{n})\leq P_{f_{n}}$, $n\geq 0$.
Moreover, the premorphism associated to $\alpha$ for the sequence
$(f_{n})_{n=0}^{\infty}$
is obviously equivalent to $f$. Hence,
$\mathcal{P}(\alpha)=[f]$.
The proof of the faithfulness of $\mathcal{P}$ is straightforward.
\end{proof}

Let us determine the essential range of the functor
$\mathcal{P}: \mathbf{SDS}\to\mathbf{OBD}$.
 Recall that the essential range of a functor
  is the subcategory of those objects in the
   codomain
category which are isomorphic to  objects in the range of the functor.

Let us denote by $\mathbf{OBD}_{\mathrm{po}}$
the full subcategory of $\mathbf{OBD}$ consisting of all  properly ordered Bratteli diagrams
(see Definition~\ref{defpobd}).

\begin{lemma}\label{lemessran}
The essential range of  $\mathcal{P}: \mathbf{SDS}\to\mathbf{OBD}$
is $\mathbf{OBD}_{\mathrm{po}}$.
\end{lemma}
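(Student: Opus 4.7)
The plan is to establish the two inclusions separately. First, I would verify that $\mathcal{P}(X,\varphi,y,\mathcal{R})$ is essentially simple for every $(X,\varphi,y,\mathcal{R}) \in \mathbf{SDS}$. By the construction of Definition~\ref{deffuncop} and Lemma~\ref{lemtop}, the maximum edge entering a vertex $(n,T) \in V_n$ corresponds to the tower $S$ of $P_{n-1}$ whose top level contains the top level of $T$. Iterating, an infinite maximum path selects a nested sequence of clopen sets $A_1 \supseteq A_2 \supseteq \cdots$, each $A_n$ a top level of some tower of $P_n$ and hence contained in $Z_n$. Since $\bigcap_n Z_n = \{y\}$, the path is determined uniquely (it is forced to choose, at each level, the tower whose top level contains $y$), and existence is guaranteed by the same choice. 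The argument for the unique minimum path is symmetric, using $\bigcap_n \varphi(Z_n) = \{\varphi(y)\}$ and the fact that minimum edges correspond to bottom levels of towers.

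For the converse, given an essentially simple $B=(V,E,\geq)$, I would construct a scaled system $(X_B,\varphi_B,y_B,\mathcal{R}_B)$---the Bratteli--Ver\v{s}ik model of $B$---with $\mathcal{P}(X_B,\varphi_B,y_B,\mathcal{R}_B) \cong B$ in $\mathbf{OBD}$. Take $X_B$ to be the set of infinite paths from $v_0$ with the cylinder topology (so that $X_B$ is compact, metrizable, and totally disconnected), let $\varphi_B$ be the Ver\v{s}ik successor map extended over the unique maximum path by sending it to the unique minimum path (continuity of this extension is precisely where essential simplicity is used), and let $y_B$ be the unique maximum path. For $\mathcal{R}_B = \{P_n\}_{n \geq 0}$, each tower of $P_n$ consists of the cylinder sets of finite paths from $v_0$ to a fixed vertex $v \in V_n$, ordered lexicographically by $\geq$, with the top of $P_n$ being the union of cylinders of maximum paths through level $n$; the set $\{y_B\}$ is then the intersection of these tops, as required in Definition~\ref{defskrp}.

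The final step is to verify $\mathcal{P}(X_B,\varphi_B,y_B,\mathcal{R}_B) \cong B$. The vertices at level $n$ of this diagram are in natural bijection with $V_n$ (towers of $P_n$ correspond to vertices of $V_n$), and each edge $(n,S,T,k) \in E(P_{n-1},P_n)$ corresponds to a unique edge of $E_n$ via the index $k$ encoding the position in the linear order on $r^{-1}\{v\}$. I expect the main obstacle to be the bookkeeping here: one must check that $\mathcal{R}_B$ satisfies conditions~(1)--(3) of Definition~\ref{defskrp} (the third, that $\bigcup_n P_n$ is a base, follows from the path-space topology) and that the natural identification is order-preserving and intertwines source and range maps, so that the resulting isomorphism lives in $\mathbf{OBD}$ and not merely in $\mathbf{BD}_1$. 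This construction is the object-level definition of the functor $\mathcal{V}$ developed in Section~5, so once it is carried out, essential surjectivity onto $\mathbf{OBD}_{\mathrm{es}}$ follows.
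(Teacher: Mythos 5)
Your argument is essentially correct, and it is more self-contained than the paper's proof: where the paper simply cites Herman--Putnam--Skau for the essential simplicity of $\mathcal{P}(\mathcal{X})$ (\cite{hps92}, Section~4) and for essential surjectivity (existence of a Kakutani--Rokhlin system, \cite[Theorem~4.2]{hps92}, combined with \cite[Theorem~4.6]{hps92}), you prove the first fact directly---your identification of the maximal (resp.\ minimal) edge into a vertex with the last (resp.\ first) passage of a tower, together with $\bigcap_n Z_n=\{y\}$ and $\bigcap_n\varphi(Z_n)=\{\varphi(y)\}$, does pin down existence and uniqueness of the extreme infinite paths---and you obtain surjectivity by building the canonical Bratteli--Ver\v{s}ik model with its canonical partition system $\mathcal{R}_B$, which is exactly the object-level construction of $\mathcal{V}$ and of the premorphism-level isomorphism $\tau_B$ carried out in Section~5 (so no circularity arises). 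Your route yields an explicit isomorphism $B\cong\mathcal{P}(\mathcal{V}(B))$ without any telescoping, whereas the paper's appeal to \cite[Theorem~4.6]{hps92} works with an arbitrary Kakutani--Rokhlin system and only gives equivalence; the price is the bookkeeping you acknowledge, all of which goes through.

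There is, however, one small but genuine gap in your first inclusion. The essential range consists of all diagrams isomorphic in $\mathbf{OBD}$ to some $\mathcal{P}(\mathcal{X})$, and you only verify that the diagrams $\mathcal{P}(\mathcal{X})$ themselves are essentially simple; you never address a diagram $B$ that is merely isomorphic to such an image. What is missing is that $\mathbf{OBD}_{\mathrm{es}}$ is closed under isomorphism in $\mathbf{OBD}$: by Theorem~\ref{thrisoo} such an isomorphism amounts to equivalence in the sense of Herman, Putnam, and Skau, and essential simplicity is preserved under isomorphism and under telescoping, hence under equivalence (\cite[Proposition~2.7]{hps92}). With that remark added, your proof is complete.
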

\begin{proof}
Let $\mathcal{X}$
be in $\mathbf{SDS}$. Then the ordered Bratteli diagram
$\mathcal{P}(\mathcal{X})$
is properly ordered \cite[Section~4]{hps92}.
Now let $B$ be an ordered Bratteli diagram which is isomorphic in $\mathbf{OBD}$ to
$\mathcal{P}(\mathcal{X})$, for some
$\mathcal{X}\in\mathbf{SDS}$.
By Proposition~\ref{thrcatobd}, $B$ is equivalent to
$\mathcal{P}(\mathcal{X})$. By \cite[Proposition~2.7]{hps92},
$B$ is also properly ordered. Hence the essential range of
$\mathcal{P}$ is contained in
$\mathbf{OBD}_{\mathrm{po}}$. Now
let $B$ be a properly ordered Bratteli diagram.
Denote by $(X,\varphi,x_{0})$  the Vershik transformation associated to
$B$ \cite[Section~3]{hps92}. Fix a system of Kakutani--Rokhlin partitions
$\mathcal{R}$ for
$(X,\varphi,x_{0})$, which exists by \cite[Theorem~4.2]{hps92}.
By \cite[Theorem~4.6]{hps92},
$B$ is equivalent to $\mathcal{P}(X,\varphi,x_{0},\mathcal{R})$.
\end{proof}

The following result  follows from
Theorem~\ref{thrful},
  Lemma~\ref{lemessran}, and
  \cite[Theorem~IV.4.1]{ma98}.

\begin{theorem}\label{threqucat}
 The functor $\mathcal{P}: \mathbf{SDS}\to\mathbf{OBD}_{\mathrm{po}}$
 is an equivalence of categories.
\end{theorem}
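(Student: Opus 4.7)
The plan is to apply the categorical criterion of Theorem~\ref{thrcat} directly: a functor is an equivalence of categories precisely when it is full, faithful, and essentially surjective. All three ingredients have already been assembled in the preceding results of the section. Fullness is Theorem~\ref{thrful}, faithfulness is Lemma~\ref{lemfaith}, and essential surjectivity onto $\mathbf{OBD}_{\mathrm{es}}$ is exactly Lemma~\ref{lemessran} (which identifies the essential range of $\mathcal{P}$ with $\mathbf{OBD}_{\mathrm{es}}$ in both directions, so in particular every object of $\mathbf{OBD}_{\mathrm{es}}$ is isomorphic, via the Ver\v{s}ik transformation construction of \cite{hps92}, to $\mathcal{P}(\mathcal{X})$ for some $\mathcal{X}\in\mathbf{SDS}$).

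The only subtlety is that $\mathcal{P}$ is contravariant rather than covariant, so strictly speaking Theorem~\ref{thrcat} should be applied to $\mathcal{P}$ viewed as a covariant functor $\mathbf{SDS}\to \mathbf{OBD}_{\mathrm{es}}^{\mathrm{op}}$ (or equivalently $\mathbf{SDS}^{\mathrm{op}}\to \mathbf{OBD}_{\mathrm{es}}$). The three properties are preserved under this passage to opposite categories, so they transfer without further work.

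Consequently no additional argument is required here: the substance of the theorem was discharged in Theorem~\ref{thrful}, Lemma~\ref{lemfaith}, and Lemma~\ref{lemessran}. In particular the main obstacle would have been fullness (inverting the combinatorial construction of $\mathcal{P}$ on morphisms to produce a continuous map realising a given ordered premorphism), but that obstacle has already been overcome in Theorem~\ref{thrful} via the refinement-chasing construction that builds $\alpha$ as a limit of partition maps. The proof of Theorem~\ref{threqucat} itself is therefore a one-line citation of Theorem~\ref{thrcat}.
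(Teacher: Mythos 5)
Your proposal is correct and is essentially identical to the paper's own proof, which likewise consists of a one-line citation of Theorems~\ref{thrful} and \ref{thrcat} together with Lemmas~\ref{lemfaith} and \ref{lemessran}. Your remark on handling contravariance via the opposite category is a sensible (implicit in the paper) clarification and does not change the argument.
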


\subsection{The functor $\mathcal{V}$ from $\mathbf{OBD}_{\mathrm{po}}$ to $\mathbf{SDS}$}\label{subsecv}
\noindent
In this subsection we shall construct the contravariant functor
$\mathcal{V} : \mathbf{OBD}_{\mathrm{po}}\to \mathbf{SDS}$ which is
the inverse of the functor
$\mathcal{P}: \mathbf{SDS}\to\mathbf{OBD}_{\mathrm{po}}$.

The definition
of  $\mathcal{V}$ on objects of $\mathbf{OBD}_{\mathrm{po}}$
coincides with the construction in \cite{hps92}. Our contribution is finding the way $\mathcal{V}$ acts on
 morphisms. In particular, this gives a way to construct factor maps between two Cantor
minimal systems by drawing suitable arrows between their ordered Bratteli diagrams.

Let $B=(V,E,\geq)$ be a properly ordered Bratteli diagram.
Denote by $\mathcal{V}(B)$ the  Bratteli--Vershik dynamical
system
associated
to $B$, as described in \cite[Section~3]{hps92} and  \cite[Section~3]{gps95}.
Recall that $\mathcal{V}(B)$ is defined as follows.
Let $X_{B}$ denote the space of infinite paths,
 topologized by specifying a basis of open sets, namely the family of
cylinder sets $U(e_{1},e_{2},\ldots,e_{k})=\{(f_{1},f_{2},\ldots) \mid
f_{i}=e_{i}, \ 1\leq i\leq k\}$.
 Denote by $x_{\max}$ and $x_{\min}$ the unique elements of
$E_{\max}$ and $E_{\min}$, respectively. The homeomorphism
$\lambda_{B}:X_{B}\to X_{B}$, called the Vershik transformation,
is defined in \cite[Section~3]{hps92}.
Then   $(X_{B},\lambda_{B},x_{\max})$
is an essentially minimal totally disconnected dynamical system.

Let us recall from \cite{hps92} the canonical system of Kakutani--Rokhlin partitions
$\mathcal{R}_{B}=(P_{n})_{n=0}^{\infty}$ for $(X_{B},\lambda_{B},x_{\max})$
such that $(X_{B},\lambda_{B},x_{\max}, \mathcal{R}_{B})$ is in $\mathbf{SDS}$.
Set $P_{0}=\{X_{B}\}$. Fix $n\geq 1$ and define $P_{n}$ as follows.
For each $v\in V_{n}$ we have a tower
$T_{v}$ in $P_{n}$.
For each $(e_{1},e_{2},\ldots,e_{n})$ in
$E_{1}\circ E_{2}\circ\cdots\circ E_{n}$ with
$r(e_{n})=v$ we have an element
$U(e_{1},e_{2},\ldots,e_{n})$, as defined above, in $T_{v}$. Hence,
\[
P_{n}=\{U(e_{1},e_{2},\ldots,e_{n}) \mid (e_{1},e_{2},\ldots,e_{n})\in
E_{1}\circ E_{2}\circ\cdots\circ E_{n}\}.
\]
Note that each $P_{n}$ is a Kakutani--Rokhlin partition and
that $\mathcal{R}_{B}=(P_{n})_{n=0}^{\infty}$ satisfies the conditions of
Definition~\ref{defskrp}, and hence is a system of Kakutani--Rokhlin partitions for
$(X_{B},\lambda_{B},x_{\max})$. Finally, set
$\mathcal{V}(B)=(X_{B},\lambda_{B},x_{\max},\mathcal{R}_{B})$.
To summarize:

\begin{proposition}
For each ordered Bratteli diagram $B=(V,E,\geq)$, the system
$\mathcal{V}(B)=(X_{B},\lambda_{B},x_{\max},\mathcal{R}_{B})$
is in $\mathbf{SDS}$.
\end{proposition}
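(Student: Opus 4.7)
The plan is to verify, one by one, that the quadruple $(X_B,\lambda_B,y_{\max},\mathcal{R}_B)$ satisfies the two ingredients required by Definition~\ref{defSDS}: (a) $(X_B,\lambda_B,y_{\max})$ is an essentially minimal t.d.~dynamical system, and (b) $\mathcal{R}_B=\{P_n\}_{n=0}^{\infty}$ is a system of Kakutani-Rokhlin partitions for it in the sense of Definition~\ref{defskrp}. Part~(a) is already summarized in the excerpt and follows from \cite[Section~3]{hps92}: $X_B$ is a compact metrizable, totally disconnected space (its topology has a countable basis of clopen cylinder sets), $\lambda_B$ is a homeomorphism, and essential simplicity of $B$ is exactly what makes the system essentially minimal with $y_{\max}$ in the unique minimal set. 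So the real work is in (b).

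For each $n\geq 1$, I would first exhibit the Kakutani-Rokhlin structure of $P_n$ explicitly. The cylinders $U(e_1,\ldots,e_n)$ with fixed endpoint $r(e_n)=v\in V_n$ and varying path $(e_1,\ldots,e_n)\in E_{1,n}$ are non-empty (since $s^{-1}\{w\}\neq\emptyset$ for all $w$), pairwise disjoint clopen sets covering $X_B$; ordering them by the lexicographic order on $E_{1,n}$ gives the $v$th tower $T_v$, whose height $J(v)$ is the number of paths from $v_0$ to $v$. The key computation, done straight from the definition of $\lambda_B$, is that if $(e_1,\ldots,e_n)$ is not maximal in $E_{1,n}$, then $\lambda_B$ sends $U(e_1,\ldots,e_n)$ bijectively onto $U(f_1,\ldots,f_n)$ where $(f_1,\ldots,f_n)$ is the lexicographic successor of $(e_1,\ldots,e_n)$ in $E_{1,n}$ (and so in particular still ends at $v$). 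This verifies condition~(1) of Definition~\ref{defkrp}. For condition~(2), the top $Z_n$ equals $\bigsqcup_{v\in V_n}U(e_1^{\max,v},\ldots,e_n^{\max,v})$, which visibly contains $y_{\max}$; that $\lambda_B(Z_n)$ equals the base $\bigsqcup_v U(f_1^{\min,v},\ldots,f_n^{\min,v})$ follows by unwinding the definition of $\lambda_B$ on maximal paths (the index $k$ where the successor rule bites is $>n$, so the first $n$ coordinates of the image are forced to be minimal), together with the analogous calculation for $\lambda_B^{-1}$ to get the reverse inclusion.

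Next I would check the three clauses of Definition~\ref{defskrp} for the sequence $\{P_n\}$. Refinement $P_n\leq P_{n+1}$ is immediate from the disjoint decomposition $U(e_1,\ldots,e_n)=\bigsqcup_{e_{n+1}:s(e_{n+1})=r(e_n)}U(e_1,\ldots,e_n,e_{n+1})$, and the fact that $\bigcup_n P_n$ is a basis for the topology is just the definition of the cylinder-set topology. For clause~(1), the tops $Z_n$ form a decreasing sequence because a maximal path in $E_{1,n+1}$ automatically has all its first $n$ edges maximal, giving $Z_{n+1}\subseteq Z_n$; each $Z_n$ is clopen as a finite union of cylinders; and the intersection $\bigcap_n Z_n$ consists exactly of infinite paths $(e_1,e_2,\ldots)$ with every $e_k$ in $E_{\max}$, which by essential simplicity of $B$ is the singleton $\{y_{\max}\}$. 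The case $n=0$ is a triviality: $P_0=\{X_B\}$ is a one-tower, height-one Kakutani-Rokhlin partition with top $X_B$ containing $y_{\max}$ and base $\lambda_B(X_B)=X_B$.

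I expect no serious obstacle; the main thing to get right is the successor calculation that $\lambda_B$ shifts $U(e_1,\ldots,e_n)$ to $U(\text{successor})$ within $T_v$, and the boundary version showing $\lambda_B(Z_n)$ is the base. Both are pure bookkeeping with the lexicographic order, and they are the only place where the explicit definition of $\lambda_B$ is used. Once these are in hand, every clause of Definitions~\ref{defkrp} and~\ref{defskrp} has been verified, so $\mathcal{R}_B$ is a system of Kakutani-Rokhlin partitions for $(X_B,\lambda_B,y_{\max})$, and therefore $\mathcal{V}(B)\in\mathbf{SDS}$.
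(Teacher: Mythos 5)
Your proposal is correct and follows the same route as the paper, which simply records this proposition as a summary of the construction (citing \cite[Section~3]{hps92} for essential minimality of $(X_{B},\lambda_{B},y_{\max})$ and remarking that the verification of Definitions~\ref{defkrp} and \ref{defskrp} for the cylinder partitions $P_{n}$ is ``not hard to see''). You have merely written out in full the bookkeeping with the lexicographic successor that the paper leaves implicit, and it checks out.
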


Let $B=(V,E,\geq)$ be an ordered Bratteli diagram. Define an ordered premorphism
$f_{B}:B\to \mathcal{P}(\mathcal{V}(B))$
as follows:
$f_{B}=(F_{B}, (n)_{n=0}^{\infty},\geq)$, where
$F_{B}=\{(v,T_{v})\mid v\in V\}$.
The decomposition of $F_{B}$  is obtained by setting
$F_{B,n}=\{(v,T_{v})\mid v\in V_{n}\}$, $n\geq 0$. The source and range maps of $F_{B}$ are defined by
$s(v,T_{v})=v$ and $r(v,T_{v})=T_{v}$. There is a unique way to define an  order on $F_{B}$ as above,
since $r^{-1}\{T_{v}\}$ is a singleton.
It is not hard to see that
$f_{B}:B\to \mathcal{P}(\mathcal{V}(B))$ is an ordered premorphism,
which turns to be  an isomorphism in the category of ordered Bratteli diagrams with ordered premorphisms
(see Definition~\ref{defisopre}).
Denote by $\tau_{B}:B\to \mathcal{P}(\mathcal{V}(B))$
the associated ordered morphism, i.e.,
$\tau_{B}=[f_{B}]$, which is an isomorphism in $\mathbf{OBD}$.

Once one fixes the isomorphism $\tau_{B}=[f_{B}]$, for each $B$, there is a unique way to
define $\mathcal{V} : \mathbf{OBD}_{\mathrm{po}}\to \mathbf{SDS}$ on morphisms
to obtain the natural inverse of $\mathcal{P}: \mathbf{SDS}\to\mathbf{OBD}_{\mathrm{po}}$
(see the proof of \cite[Theorem~IV.4.1]{ma98} for details). In fact,
let $h:B\to C$ be a morphism in
$\mathbf{OBD}_{\mathrm{po}}$.
Then $\tau_{C}h\tau_{B}^{-1}: \mathcal{P}(\mathcal{V}(B))\to
\mathcal{P}(\mathcal{V}(C))$ is a morphism in
$\mathbf{OBD}_{\mathrm{po}}$. By Theorem~\ref{thrful}, there is a
unique morphism $\alpha: \mathcal{V}(C)\to \mathcal{V}(B)$ such that
$\mathcal{P}(\alpha)=h$. Set $\mathcal{V}(h)=\alpha$.
We have almost finished showing the next result. All the required properties of the map $\mathcal{V}$ follow from the equality
$\mathcal{P}(\mathcal{V}(f))\tau_{B}=\tau_{A}h$  
(cf.~the proof of \cite[Theorem~IV.4.1]{ma98}), which, in particular, gives
 $\tau: 1_{{\mathbf{OBD}}_{\mathrm{es}}}\cong \mathcal{P}\mathcal{V}$.

\begin{theorem}\label{thrfuncv}
 The map $\mathcal{V} : \mathbf{OBD}_{\mathrm{po}}\to \mathbf{SDS}$ as defined above,
 is a contravariant functor which is
  an equivalence of categories and the unique (up to natural isomorphism) inverse of
  the functor $\mathcal{P}:\mathbf{SDS}\to \mathbf{OBD}_{\mathrm{po}}$.
\end{theorem}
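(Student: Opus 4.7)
The plan is to carry out three tasks in sequence: first verify that $\mathcal{V}$ is a well-defined contravariant functor; next establish the natural isomorphisms $\tau: 1_{\mathbf{OBD}_{\mathrm{es}}}\cong \mathcal{P}\mathcal{V}$ and $\sigma: 1_{\mathbf{SDS}}\cong \mathcal{V}\mathcal{P}$; and finally conclude uniqueness of the quasi-inverse by a standard categorical argument.

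For the first task, I would appeal directly to the fact that $\mathcal{P}$ is full (Theorem~\ref{thrful}) and faithful (Lemma~\ref{lemfaith}). For each morphism $h: B\to C$ in $\mathbf{OBD}_{\mathrm{es}}$, the composite $\tau_{C}\, h\, \tau_{B}^{-1} : \mathcal{P}(\mathcal{V}(B))\to \mathcal{P}(\mathcal{V}(C))$ has, by fullness, at least one preimage under $\mathcal{P}$ and, by faithfulness, at most one; hence $\mathcal{V}(h)$ is well-defined. Preservation of identities and of composition (contravariantly) then follows by applying $\mathcal{P}$ and invoking uniqueness. Indeed, $\mathcal{P}(\mathrm{id}_{\mathcal{V}(B)})=\mathrm{id}_{\mathcal{P}(\mathcal{V}(B))}=\tau_{B}\,\mathrm{id}_{B}\,\tau_{B}^{-1}$, while for a composable pair $B\xrightarrow{h}C\xrightarrow{g}D$, contravariance of $\mathcal{P}$ gives
\[
\mathcal{P}(\mathcal{V}(h)\circ \mathcal{V}(g))
=\mathcal{P}(\mathcal{V}(g))\circ\mathcal{P}(\mathcal{V}(h))
=\tau_{D}\,g\,\tau_{C}^{-1}\tau_{C}\,h\,\tau_{B}^{-1}
=\tau_{D}\,(gh)\,\tau_{B}^{-1}
=\mathcal{P}(\mathcal{V}(gh)),
\]
and uniqueness forces $\mathcal{V}(gh)=\mathcal{V}(h)\circ \mathcal{V}(g)$.

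For the second task, the family $\tau=\{\tau_{B}\}$ is by construction a natural isomorphism $1_{\mathbf{OBD}_{\mathrm{es}}}\cong \mathcal{P}\mathcal{V}$; the naturality square is exactly the relation $\mathcal{P}(\mathcal{V}(h))\circ \tau_{B}=\tau_{C}\circ h$ defining $\mathcal{V}$ on morphisms. To construct $\sigma$, for each $\mathcal{X}\in\mathbf{SDS}$ I would let $\sigma_{\mathcal{X}}:\mathcal{X}\to \mathcal{V}(\mathcal{P}(\mathcal{X}))$ be the unique morphism satisfying $\mathcal{P}(\sigma_{\mathcal{X}})=\tau_{\mathcal{P}(\mathcal{X})}^{-1}$; existence and uniqueness again follow from fullness and faithfulness of $\mathcal{P}$, and $\sigma_{\mathcal{X}}$ is an isomorphism because full, faithful functors reflect isomorphisms. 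Naturality of $\sigma$ in $\alpha:\mathcal{X}\to \mathcal{Y}$ reduces, upon applying $\mathcal{P}$ and substituting $\mathcal{P}(\mathcal{V}(\mathcal{P}(\alpha)))=\tau_{\mathcal{P}(\mathcal{X})}\mathcal{P}(\alpha)\tau_{\mathcal{P}(\mathcal{Y})}^{-1}$, to the trivial identity $\mathcal{P}(\alpha)\circ\tau_{\mathcal{P}(\mathcal{Y})}^{-1}=\mathcal{P}(\alpha)\circ\tau_{\mathcal{P}(\mathcal{Y})}^{-1}$; faithfulness then transports naturality back to $\mathbf{SDS}$. In view of Theorem~\ref{thrcat} (applied to the covariant functors $\mathcal{P}$ and $\mathcal{V}$ viewed as functors between the opposite categories), the existence of $\tau$ and $\sigma$ shows that $\mathcal{V}$ is an equivalence.

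For the third task, uniqueness of the quasi-inverse up to natural isomorphism is a standard categorical fact: if $\mathcal{V}'$ is any other contravariant functor with $\mathcal{P}\mathcal{V}'\cong 1_{\mathbf{OBD}_{\mathrm{es}}}$ and $\mathcal{V}'\mathcal{P}\cong 1_{\mathbf{SDS}}$, then $\mathcal{V}'\cong \mathcal{V}\mathcal{P}\mathcal{V}'\cong \mathcal{V}$. The only real obstacle is the bookkeeping of contravariance on both sides, in particular fixing the correct direction of each naturality square; once that is settled, every verification collapses to a one-line calculation after applying $\mathcal{P}$ and invoking the full--faithful cancellation.
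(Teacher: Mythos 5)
Your proposal is correct and follows essentially the same route as the paper: the paper defines $\mathcal{V}(h)$ via the unique $\mathcal{P}$-preimage of $\tau_{C}h\tau_{B}^{-1}$ (using Theorem~\ref{thrful} and Lemma~\ref{lemfaith}) and then simply cites the standard argument of Mac Lane's Theorem~IV.4.1, which is exactly the verification of functoriality, naturality of $\tau$, construction of $\sigma$ via $\mathcal{P}(\sigma_{\mathcal{X}})=\tau_{\mathcal{P}(\mathcal{X})}^{-1}$, and uniqueness of the quasi-inverse that you carry out explicitly. Your handling of the contravariance bookkeeping matches the paper's conventions, so no gap remains.
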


Let us examine how the functor $\mathcal{V}$ acts on morphisms.
Let $f:B\to C$ be an ordered premorphism as in Definition~\ref{defpreobd}.
Define a map $\alpha:X_{C}\to X_{B}$ as follows.
Let $x=(s_{1},s_{2},\ldots)$ be in $X_{C}$, i.e., an infinite path in $S$.
Define the path $\alpha(x)=(e_{1},e_{2},\ldots)$ in $X_{B}$ as follows.
Fix $n\geq 1$.
By Definition~\ref{defpreobd}, the  diagram

 \[
\xymatrix{V_{0}\ar[r]^{E_{0,n}}\ar[d]_{F_{0}}
 &V_{n}\ar[d]^{F_{n}} \\
 W_{0}\ar[r]_{S_{0,f_{n}}}
 &W_{f_{n}}
 }
\]
commutes,  that is, $F_{0}\circ S_{0,f_{n}}\cong E_{0,n}\circ F_{n}$. Thus, there is a unique path
$(e_{1},e_{2},\ldots,e_{n},d_{n})$ in $E_{0,n}\circ F_{n}$,  corresponding to
the path $(s_{0},s_{1},\ldots,s_{f_{n}})$ in
$F_{0}\circ S_{0,f_{n}}$,
where $s_{0}$ is the unique element of $F_{0}$.
We need to check that the first $n$ edges of the path associated to each $m>n$ coincide with the edges for $n$.

\begin{lemma}\label{lemtech}
With the above notation, let $m>n$ and
 consider the path $(e_{1}',e_{2}',\ldots,e_{m}',d_{m}')$ in
$E_{0,m}\circ F_{m}$
associated
to $m$ in the above construction.
Then  $e_{i}'=e_{i}$ for each $1\leq i\leq n$.
\end{lemma}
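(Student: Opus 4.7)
The plan is to exploit the compositional structure of the commutativity requirement in Definition~\ref{defpreobd}. By iterating condition~(2) of that definition, for every pair $0\leq k\leq l$ one obtains a (necessarily unique) source-, range-, and order-preserving bijection $\Phi_{k,l}:F_k\circ S_{f_k,f_l}\to E_{k,l}\circ F_l$, with the obvious interpretation involving $F_0$ when $k=0$; uniqueness here is the usual consequence of the fact that $\geq$ is a total order on each fibre $r^{-1}\{w\}$. The first key step is to establish the factorization
\[
\Phi_{0,m}=(\mathrm{id}_{E_{0,n}}\star\Phi_{n,m})\circ(\Phi_{0,n}\star\mathrm{id}_{S_{f_n,f_m}}),
\]
where the symbol $\star$ denotes extending a bijection of paths by the identity along the common vertex set ($V_n$ on the left, $W_{f_n}$ on the right). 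The right-hand side is, by inspection, a source/range/order-preserving bijection $F_0\circ S_{0,f_m}\to E_{0,m}\circ F_m$; by the aforementioned uniqueness it must coincide with $\Phi_{0,m}$.

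Once the factorization is in hand, applying it to the specific path $(s_0,s_1,\ldots,s_{f_m})$ gives the conclusion directly. The inner factor $\Phi_{0,n}\star\mathrm{id}$ modifies only the prefix of length $f_n+1$, sending $(s_0,\ldots,s_{f_n})$ to $(e_1,\ldots,e_n,d_n)$, which by the construction preceding the lemma is precisely the level-$n$ output. This yields the intermediate object $(e_1,\ldots,e_n,d_n,s_{f_n+1},\ldots,s_{f_m})\in E_{0,n}\circ F_n\circ S_{f_n,f_m}$. The outer factor $\mathrm{id}\star\Phi_{n,m}$ then acts only on the tail $(d_n,s_{f_n+1},\ldots,s_{f_m})\in F_n\circ S_{f_n,f_m}$, replacing it by its image $(e_{n+1}',\ldots,e_m',d_m')$ under $\Phi_{n,m}$ while leaving the initial $n$ edges untouched. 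The resulting level-$m$ path is therefore $(e_1,\ldots,e_n,e_{n+1}',\ldots,e_m',d_m')$, proving that $e_i'=e_i$ for $1\leq i\leq n$.

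The main obstacle is the rigorous justification of the factorization identity. One must be precise about what it means to extend a bijection of paths by the identity along a composition of edge sets, and verify that the composite respects the lexicographic order on paths of common range. Appealing to uniqueness of order-preserving bijections (rather than checking equality by hand on each path) should keep the argument short; the remaining bookkeeping is essentially the observation that splitting a path at level $f_n$ is compatible with the commutativity of the two sub-squares produced at levels $[0,n]$ and $[n,m]$, an observation that is in the same spirit as Lemma~\ref{lemtop}.
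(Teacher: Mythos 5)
Your proposal is correct and follows essentially the same route as the paper's proof: both track the path $(s_{0},s_{1},\ldots,s_{f_{m}})$ through the two commuting squares at levels $[0,n]$ and $[n,m]$, pass via the intermediate path $(e_{1},\ldots,e_{n},d_{n},s_{f_{n}+1},\ldots,s_{f_{m}})$ in $E_{0,n}\circ F_{n}\circ S_{f_{n},f_{m}}$, and invoke uniqueness of the order-preserving, source- and range-intertwining bijections to conclude that the identification $E_{0,m}\circ F_{m}\cong E_{0,n}\circ F_{n}\circ S_{f_{n},f_{m}}$ fixes the first $n$ edges. Your explicit factorization $\Phi_{0,m}=(\mathrm{id}\star\Phi_{n,m})\circ(\Phi_{0,n}\star\mathrm{id})$ is just a formalized statement of what the paper expresses by saying the isomorphisms involved are unique because of the order.
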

\begin{proof}
Consider the following commutative diagram:
\[
\xymatrix{
V_{0}\ar[r]^{E_{0,n}}\ar[d]_{F_{0}}
 &V_{n}\ar[d]^{F_{n}}\ar[r]^{E_{n,m}} & V_{m}\ar[d]^{F_{m}} \\
 W_{0}\ar[r]_{S_{0,f_{n}}}
 & W_{f_{n}} \ar[r]_{S_{f_{n},f_{m}}} & W_{f_{m}} .
 }
\]
Since $(e_{1},e_{2},\ldots,e_{n},d_{n})$ in $E_{0,n}\circ F_{n}$
is the unique path corresponding to $(s_{0},s_{1},\ldots,s_{f_{n}})$ in
$F_{0}\circ S_{0,f_{n}}$, we get $r(d_{n})=r(s_{f_{n}})$. Thus the  path
\begin{equation}\label{equ1}
(e_{1},e_{2},\ldots,e_{n},d_{n},s_{f_{n}+1},\ldots,s_{f_{m}})
\end{equation}
in $ E_{0,n}\circ F_{n} \circ S_{f_{n},f_{m}}$
is the unique path corresponding to
$(s_{0},s_{1},\ldots,s_{f_{m}})$ in $F_{0}\circ S_{0,f_{m}}$ by the isomorphism
$ E_{0,n}\circ F_{n} \circ S_{f_{n},f_{m}}\cong F_{0}\circ S_{0,f_{m}}$.

Moreover, the path
$(e_{1}',e_{2}',\ldots,e_{m}',d_{m}')$  in
$E_{0,m}\circ F_{m}$ is the unique path corresponding to
the path $(s_{0},s_{1},\ldots,s_{f_{m}})$
by the isomorphism
$ E_{0,m}\circ F_{m} \cong F_{0}\circ S_{0,f_{m}}$.
Since the  isomorphisms involved are unique (because of the order),
by the isomorphism
$ E_{0,m}\circ F_{m} \cong
E_{0,n}\circ F_{n} \circ S_{f_{n},f_{m}}$,
the path $(e_{1}',e_{2}',\ldots,e_{m}',d_{m}')$
corresponds to
the path in  (\ref{equ1}).

Let $(e_{n+1}'',e_{n+2}'',\ldots,e_{m}'',d_{m}'')$ denote the unique path in
$ E_{n,m}\circ F_{m}$ corresponding to the path
$(d_{n},s_{f_{n}+1},\ldots,s_{f_{m}})$ in $F_{n}\circ S_{f_{n},f_{m}}$
by the isomorphism
 $ E_{n,m}\circ F_{m}  \cong F_{n}\circ S_{f_{n},f_{m}}$.
Then $r(e_{n})=s(d_{n})=s(e_{n+1}'')$. Thus, the path
\begin{equation}\label{equ2}
(e_{1},e_{2},\ldots,e_{n},e_{n+1}'',e_{n+2}'',\ldots,e_{m}'',d_{m}'')
\end{equation}
corresponds to the path in \eqref{equ1} by the isomorphism
$ E_{0,m}\circ F_{m} \cong E_{0,n}\circ F_{n}\circ S_{f_{n},f_{m}}$.
Since the path $(e_{1}',e_{2}',\ldots,e_{m}',d_{m}')$
also corresponds to
the path~(\ref{equ1}) by the same isomorphism, we conclude that
 $(e_{1}',e_{2}',\ldots,e_{m}',d_{m}')$ is equal to the path in
\eqref{equ2}.
Therefore, $e_{i}'=e_{i}$ for  $1\leq i\leq n$,
$e_{i}'=e_{i}''$ for  $n+1\leq i\leq m$, and $d_{m}'=d_{m}''$.
\end{proof}

By the preceding lemma, one can define, without ambiguity,
the path $\alpha(x)=(e_{1},e_{2},\ldots)$ in $X_{B}$
associated to the path $x=(s_{1},s_{2},\ldots)$  in $X_{C}$.
(We will describe a second way  to compute $\alpha(x)$  after Proposition~\ref{propcom}.)
We thus obtain  a map
$\alpha:X_{C}\to X_{B}$.
 It is not hard to see that if we replace $f$ with another representative
of the class $[f]$, then we get the same $\alpha$.

\begin{proposition}\label{propcom}
Let $f:B\to C$ be a premorphism in $\mathbf{OBD}_{\mathrm{po}}$
and $\alpha:X_{C}\to X_{B}$ be its associated map
 as defined above. Then $\mathcal{V}([f])=\alpha$.
\end{proposition}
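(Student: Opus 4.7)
My plan is to identify $\mathcal{V}(h)$ via its defining universal property. Recall that $\mathcal{V}(h)$ was defined as the unique morphism $\beta\colon\mathcal{V}(C)\to\mathcal{V}(B)$ in $\mathbf{SDS}$ for which $\mathcal{P}(\beta)\circ\tau_{B}=\tau_{C}\circ h$, existence being supplied by Theorem~\ref{thrful} and uniqueness by Lemma~\ref{lemfaith}. It therefore suffices to verify that the map $\alpha$ constructed before the proposition satisfies: (i) $\alpha$ is a morphism in $\mathbf{SDS}$, and (ii) $\mathcal{P}(\alpha)\circ\tau_{B}=\tau_{C}\circ h$ in $\mathbf{OBD}_{\mathrm{es}}$.

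For (i), well-definedness of $\alpha$ (independence of the representative $f$ of $h$) has already been noted. Continuity of $\alpha$ follows from the fact that, for a basic cylinder $U(e_{1},\ldots,e_{n})\subseteq X_{B}$, the preimage $\alpha^{-1}(U(e_{1},\ldots,e_{n}))$ is precisely the union of those cylinders $U(s_{1},\ldots,s_{f_{n}})\subseteq X_{C}$ for which $(s_{0},s_{1},\ldots,s_{f_{n}})$ is sent, under the order-preserving bijection $F_{0}\circ S_{0,f_{n}}\cong E_{0,n}\circ F_{n}$, to a path of the form $(e_{1},\ldots,e_{n},d_{n})$. The base-point identity $\alpha(y_{\max})=y_{\max}$ holds because order-preserving bijections of linearly ordered finite path sets send maximal paths to maximal paths. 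For the Ver\v{s}ik equivariance $\alpha\circ\lambda_{C}=\lambda_{B}\circ\alpha$, given $x\neq y_{\max}$ in $X_{C}$, I choose $n$ large enough that the edges at which $x$ and $\lambda_{C}(x)$ disagree all lie in the first $f_{n}$ levels; both $\alpha(\lambda_{C}(x))$ and $\lambda_{B}(\alpha(x))$ are then obtained by applying the bijection $F_{0}\circ S_{0,f_{n}}\cong E_{0,n}\circ F_{n}$ to two paths differing only on an initial segment, and their agreement is forced by the Ver\v{s}ik successor rule in $B$ and $C$ together with the order-preservation of this bijection.

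For (ii), I compute $\mathcal{P}(\alpha)$ using the very sequence $(f_{n})_{n=0}^{\infty}$ already attached to $f$. The refinement condition $\alpha^{-1}(P_{n})\leq P_{f_{n}}$ follows from the continuity analysis, and the nesting of the tops from the base-point property, so this is a legal choice of level sequence in Definition~\ref{deffuncop}. With this sequence, the edge set of $\mathcal{P}(\alpha)$ at level $n$ records how the towers of the $f_{n}$th Kakutani--Rokhlin partition of $\mathcal{V}(C)$, indexed by $W_{f_{n}}$, fill the towers of $\alpha^{-1}(P_{n})$, indexed by $V_{n}$. Unravelling the cylinder-set description of these partitions exhibits this combinatorial datum, together with its linear order, as the datum $F_{n}$ of the premorphism $f$ transported across the canonical vertex-to-tower identifications $\tau_{B}$ and $\tau_{C}$. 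Hence $\mathcal{P}(\alpha)\circ\tau_{B}=\tau_{C}\circ f$ at the level of ordered premorphisms, so $\mathcal{P}(\alpha)\circ\tau_{B}=\tau_{C}\circ h$ as ordered morphisms, which completes (ii) and hence the proof.

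The main obstacle is the explicit matching in step (ii): one must identify the tower-intersection edge set $E(\alpha^{-1}(P_{n}),P_{f_{n}})$, which lives in the dynamical picture, with the abstract edge set $F_{n}$ of $f$, preserving source, range, and linear order. All the tools needed for this bookkeeping--Lemma~\ref{lemtop}, the cylinder-set description of the towers of $\mathcal{V}(B)$ and $\mathcal{V}(C)$, and the order clause in Definition~\ref{defpreobd}--are in place, so the argument reduces to a careful tracing through the constructions.
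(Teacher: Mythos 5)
Your proposal is correct and follows essentially the same route as the paper: show that $\alpha$ is a morphism in $\mathbf{SDS}$ (continuity via cylinder sets, preservation of the maximal path, Ver\v{s}ik equivariance), verify the intertwining relation $\mathcal{P}(\alpha)\tau_{B}=\tau_{C}h$, and conclude by faithfulness of $\mathcal{P}$, which is exactly the uniqueness clause in the definition of $\mathcal{V}(h)$. Your write-up merely fills in details the paper labels ``easy to see'' or ``easily verified.''
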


\begin{proof}
With the above notation,  $\alpha(U(s_{1},s_{2},\ldots,s_{f_{n}}))
\subseteq U(e_{1},e_{2},\ldots,e_{n})$. This shows that $\alpha$ is continuous.
Moreover,  $\alpha\circ \lambda_{C}=\lambda_{B}\circ \alpha$
and $\alpha$ maps the unique path in $S_{\max}$ to the unique path in $E_{\max}$.
Thus, $\alpha:\mathcal{V}(C)\to \mathcal{V}(B)$ is a morphism in $\mathbf{SDS}$.
Also,
$\mathcal{P}(\alpha)\tau_{B}=\tau_{A}[f]$. Hence,
$\mathcal{P}(\alpha)=\mathcal{P}(\mathcal{V}([f]))$,
and so by Theorem~\ref{thrful},
 $\mathcal{V}([f])=\alpha$.
\end{proof}

We remark that the proof of Lemma~\ref{lemtech} gives
a second  method for computing $\mathcal{V}([f])$ above.
This turns out to be easier to follow---at least in some cases---as it requires less
computation.
In fact, let $x=(s_{1},s_{2},\ldots)$ be in $X_{C}$.
Define the path $\alpha(x)=(e_{1},e_{2},\ldots)$ in $X_{B}$ as follows.
First consider the following commutative diagram:
 \[
\xymatrix{V_{0}\ar[r]^{E_{1}}\ar[d]_{F_{0}}
 &V_{1}\ar[d]^{F_{1}} \\
 W_{0}\ar[r]_{S_{0,f_{1}}}
 &W_{f_{1}}.
 }
\]
Then there is a unique path
$(e_{1},d_{1})$ in $E_{1}\circ F_{1}$,  corresponding to
the path $(s_{0},s_{1},\ldots,s_{f_{1}})$ in
$F_{0}\circ S_{0,f_{1}}$,
where $s_{0}$ is the unique element of $F_{0}$.
Now consider the following commutative diagram:
 \[
\xymatrix{V_{1}\ar[r]^{E_{2}}\ar[d]_{F_{1}}
 &V_{2}\ar[d]^{F_{2}} \\
 W_{f_{1}}\ar[r]_{S_{f_{1},f_{2}}}
 &W_{f_{2}}.
 }
\]
Then there is a unique path
$(e_{2},d_{2})$ in $E_{2}\circ F_{2}$,  corresponding to
the path $(d_{1},s_{f_{1}+1},\ldots,s_{f_{2}})$ in
$F_{1}\circ S_{f_{1},f_{2}}$. Continuing this procedure, we obtain
a path $(e_{1},e_{2},\ldots)$ in $X_{B}$ which is the same as
$\alpha(x)$ obtained by the previous method (by the proof of Lemma~\ref{lemtech}).

As stated in the proof of Theorem~\ref{thrfuncv}, the correspondence
(natural transformation) $\tau$,  defined above,
gives $1_{\mathbf{OBD}_{\mathrm{po}}}\cong \mathcal{P}\mathcal{V}$.
Using this, a standard categorical procedure gives a correspondence $\sigma$
which implements $1_{\mathbf{SDS}}\cong \mathcal{V}\mathcal{P}$.
In fact, let $\mathcal{X}$ be in $\textbf{SDS}$. Then
$\tau_{\mathcal{P}(\mathcal{X})}: \mathcal{P}(\mathcal{X})\to
\mathcal{P}(\mathcal{V}(\mathcal{P}(\mathcal{X})))$ is an isomorphism in $\mathbf{OBD}$.
By Theorem~\ref{thrful}, there is a unique isomorphism
$\sigma_{\mathcal{X}}:\mathcal{X}\to \mathcal{V}(\mathcal{P}(\mathcal{X}))$
such that
$\mathcal{P}(\sigma_{\mathcal{X}})=\tau_{\mathcal{P}(\mathcal{X})}^{-1}$.
Moreover,
$\sigma : 1_{\mathbf{SDS}}\cong \mathcal{V}\mathcal{P}$.
Let us summarize the results of this section as follows.

\begin{theorem}\label{theequcat2}
The contravariant functors
$\mathcal{P}: \mathbf{SDS}\to\mathbf{OBD}_{\mathrm{po}}$ and
$\mathcal{V} : \mathbf{OBD}_{\mathrm{po}}\to \mathbf{SDS}$
are equivalences of categories which are inverse to each other, with
respect to the natural isomorphisms
$\tau:  \mathcal{P}\mathcal{V} \cong 1_{\mathbf{OBD}_{\mathrm{po}}} $ and
$\sigma :  \mathcal{V}\mathcal{P} \cong 1_{\mathbf{SDS}}$.
\end{theorem}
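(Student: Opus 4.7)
The plan is to assemble this theorem as a summary of the results already established in Sections~4 and 5, so the proof is essentially a matter of citing the relevant statements and making the two natural isomorphisms explicit. First I would invoke Theorem~\ref{threqucat} for the assertion that $\mathcal{P}: \mathbf{SDS}\to\mathbf{OBD}_{\mathrm{es}}$ is an equivalence of categories. Recall this was obtained by combining fullness (Theorem~\ref{thrful}), faithfulness (Lemma~\ref{lemfaith}), and essential surjectivity (Lemma~\ref{lemessran}) with the abstract criterion Theorem~\ref{thrcat}.

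Next I would invoke Theorem~\ref{thrfuncv}, which provides the contravariant functor $\mathcal{V} : \mathbf{OBD}_{\mathrm{es}}\to \mathbf{SDS}$, shows it is an equivalence of categories, and shows it is the unique (up to natural isomorphism) inverse of $\mathcal{P}$. The construction of $\mathcal{V}$ on morphisms was forced by the explicit family of ordered isomorphisms $\tau_{B} = [f_{B}]: B \to \mathcal{P}(\mathcal{V}(B))$ defined just before Theorem~\ref{thrfuncv}; by construction these assemble into the natural isomorphism $\tau: 1_{\mathbf{OBD}_{\mathrm{es}}} \cong \mathcal{P}\mathcal{V}$ stated in the theorem.

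For the second natural isomorphism $\sigma : 1_{\mathbf{SDS}} \cong \mathcal{V}\mathcal{P}$, I would invoke the construction carried out immediately after Proposition~\ref{propcom}: for each $\mathcal{X} \in \mathbf{SDS}$, Theorem~\ref{thrful} together with Lemma~\ref{lemfaith} yields a unique morphism $\sigma_{\mathcal{X}} : \mathcal{X} \to \mathcal{V}(\mathcal{P}(\mathcal{X}))$ with $\mathcal{P}(\sigma_{\mathcal{X}}) = \tau_{\mathcal{P}(\mathcal{X})}^{-1}$, and $\sigma_{\mathcal{X}}$ is an isomorphism because $\tau_{\mathcal{P}(\mathcal{X})}$ is. Naturality of $\sigma$ in $\mathcal{X}$ follows from naturality of $\tau$ combined with the faithfulness of $\mathcal{P}$: for any morphism $\alpha : \mathcal{X} \to \mathcal{Y}$ in $\mathbf{SDS}$, applying $\mathcal{P}$ to both candidate squares reduces the check to the identity $\mathcal{P}(\alpha) \circ \tau_{\mathcal{P}(\mathcal{Y})}^{-1} = \tau_{\mathcal{P}(\mathcal{X})}^{-1} \circ \mathcal{P}(\mathcal{V}(\mathcal{P}(\alpha)))$, which is exactly the naturality square for $\tau$ at the morphism $\mathcal{P}(\alpha)$ (already known from Theorem~\ref{thrfuncv}).

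The only step requiring any thought is the naturality of $\sigma$, and even there the only obstacle is organizing the diagram chase; it is a standard consequence of the ``transport of structure'' argument used whenever one turns an equivalence of categories into a pair of mutually quasi-inverse functors (see the proof of \cite[Theorem~IV.4.1]{ma98}). No new input beyond what has been proved in Sections~4 and 5 is required, so the proof reduces to explicitly citing Theorems~\ref{threqucat} and \ref{thrfuncv} together with the explicit descriptions of $\tau$ and $\sigma$ already given.
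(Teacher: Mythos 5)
Your proposal is correct and follows essentially the same route as the paper: Theorem~\ref{theequcat2} is stated there as a summary of Sections~4 and 5, with the equivalence coming from Theorems~\ref{threqucat} and \ref{thrfuncv} and the natural isomorphisms $\tau$ and $\sigma$ being exactly the constructions given before and after Proposition~\ref{propcom}. Your added diagram chase for the naturality of $\sigma$ via faithfulness of $\mathcal{P}$ is just a spelled-out version of the "standard categorical procedure" (Mac Lane, Theorem~IV.4.1) that the paper invokes.
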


\section{Cantor Minimal Systems}\label{seccms}

In this section we shall apply the results of the previous 
section to Cantor minimal dynamical systems and  their
factor maps
(thus obtaining new results as an application of the categorical
methods).

\subsection{Cantor Systems}
Recall that a dynamical system $(X,\varphi)$ is called a Cantor minimal
system if $X$ is homeomorphic to the Cantor set and $\varphi$ is a
minimal homeomorphism of $X$. These systems are of great importance
in symbolic dynamics. Every Cantor minimal system has a Bratteli--Vershik
model (see, e.g., the definition of the functor $\mathcal{P}$ on objects in Section~\ref{secp}).

Recall the definition of a simple Bratteli diagram from Definition~\ref{defsbd}. The following well-known fact follows from the results of \cite{hps92}.

\begin{proposition}\label{prop_min}
Let $B=(V,E,\geq)$ be a properly ordered Bratteli diagram. Then the following
statements are equivalent:
\begin{enumerate}
\item\label{prop_min_it1}
the system
$(X_{B},\lambda_{B})$ is minimal;
\item\label{prop_min_it2}
 $(V,E)$ is a simple Bratteli diagram.
\end{enumerate}
\end{proposition}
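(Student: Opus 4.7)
The plan is to prove both implications by exploiting the tower structure of the canonical Kakutani--Rokhlin partitions $P_m=\{U(e_1,\ldots,e_m):(e_1,\ldots,e_m)\in E_{0,m}\}$ associated with $\mathcal{V}(B)$ and the way $\lambda_B$ acts on them.

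For $(2)\Rightarrow(1)$, I would fix any $x\in X_B$ and any non-empty basic cylinder $U=U(f_1,\ldots,f_k)$. Using simplicity of $B$, I pick $m>k$ such that every pair $(v,w)\in V_k\times V_m$ is joined by at least one path. Then for each $w\in V_m$, adjoining a path from $r(f_k)$ to $w$ produces a subcylinder of $U$ lying in the tower $T_w$ of $P_m$ at some position $q_w$. Because $\lambda_B$ simply moves a point upward in its current tower of $P_m$ and, upon reaching the top, lands at the base of another tower (even at the exceptional point $y_{\max}$, whose image $y_{\min}$ lies at the base of a tower), after finitely many iterations $\lambda_B^n(x)$ reaches the base of some tower $T_{w'}$, and $q_{w'}$ further iterations put $\lambda_B^{n+q_{w'}}(x)\in U\cap T_{w'}\subseteq U$. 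Hence every forward orbit is dense, and the system is minimal.

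For $(1)\Rightarrow(2)$, I argue the contrapositive: assuming $(V,E)$ is not simple, I construct a point whose forward orbit is not dense. Non-simplicity gives some $n_0$ such that for every $m>n_0$ some vertex of $V_{n_0}$ fails to reach some vertex of $V_m$; by pigeonhole over the finite set $V_{n_0}$, one vertex $v^*\in V_{n_0}$ misses vertices at infinitely many levels. Let $D=\{u\in V:v^*\to u\}$ and $N=V\setminus D$. A key observation is that $N$ is ancestor-closed at levels $\geq n_0$ (if $v'\to u$ with $u\in N$, then $v'\in D$ would force $u\in D$). Combined with the infinitely many missed vertices, this provides arbitrarily long branches in the tree of finite $N$-paths starting at level $n_0+1$, so K\"onig's lemma produces an infinite tail $\xi=(e_{n_0+1},e_{n_0+2},\ldots)$ entirely in $N$, which I extend to a full path $\eta\in X_B$ by prepending any path from $V_0$ to $s(e_{n_0+1})$. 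Note that $r(\eta_{n_0})\neq v^*$.

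The remaining, and most delicate, step is to verify that $r(\lambda_B^n(\eta)_{n_0})\neq v^*$ for every $n\geq 0$, which will show that the forward orbit of $\eta$ avoids the non-empty clopen set $\{y\in X_B:r(y_{n_0})=v^*\}$ and hence is not dense. This is the main obstacle. The key point is that each application of $\lambda_B$ changes only a finite prefix of its argument, so for each $n$ there is a level $L_n$ past which $\lambda_B^n(\eta)$ coincides with $\eta$. Taking $L_n\geq n_0$ if necessary, the vertex of $\lambda_B^n(\eta)$ at level $L_n+1$ lies in $N$, and the ancestor-closedness of $N$ at levels $\geq n_0$ propagates this condition downward to force the vertex at level $n_0$ to lie in $V_{n_0}\setminus\{v^*\}$. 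The subtle aspect is that although $\lambda_B^n$ can alter arbitrarily many initial edges as $n$ grows, for each individual $n$ the alteration remains finite, which is exactly what the argument needs to complete the proof.
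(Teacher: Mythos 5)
Your direction $(2)\Rightarrow(1)$ is correct: the tower argument with the canonical partitions $P_m$, including the remark that the exceptional point $y_{\max}$ is sent to the base point $y_{\min}$, does show every forward orbit is dense. The genuine gap is in $(1)\Rightarrow(2)$, precisely at the step you yourself single out as the delicate one. You justify that $\lambda_B^n(\eta)$ never has $v^*$ at level $n_0$ by asserting that ``each application of $\lambda_B$ changes only a finite prefix of its argument.'' That is false at the one exceptional point: $\lambda_B(y_{\max})=y_{\min}$, and $y_{\min}$ need not agree with $y_{\max}$ in any coordinate. Nothing in your construction prevents some iterate $\lambda_B^n(\eta)$ from being $y_{\max}$ --- indeed $y_{\max}$ itself is a perfectly possible output of your K\"onig argument, since all vertices of the maximal path above level $n_0$ may lie in $N$ --- and at that step your tail argument gives no control over $\lambda_B^{n+1}(\eta)=y_{\min}$, whose vertex at level $n_0$ could a priori be $v^*$. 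So the claimed avoidance of the clopen set $\{y\in X_B : r(y_{n_0})=v^*\}$ is not established as written.

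The gap can be closed, but it requires an extra argument, and this is exactly where essential simplicity must be used. Since (by your ancestor-closedness) every edge into an $N$-vertex at a level above $n_0$ has its source in $N$, the \emph{minimal} incoming edge of such a vertex also has source in $N$; as $N$ meets every level above $n_0$, a finiteness (K\"onig/inverse-limit) argument produces an infinite path all of whose edges are minimal and all of whose vertices from level $n_0$ on lie in $N$. By uniqueness of the minimal path this path is $y_{\min}$, and the same reasoning with maximal edges shows $y_{\max}$ has the same property. Hence the closed set $X_N=\{x\in X_B : \text{all vertices of } x \text{ at levels} \geq n_0 \text{ lie in } N\}$ is nonempty, proper, and $\lambda_B$-invariant (the exceptional transition $y_{\max}\mapsto y_{\min}$ stays inside it), which both contradicts minimality directly and repairs your orbit computation; alternatively a case analysis on whether $y_{\max}$ and $y_{\min}$ lie in $X_N$ or not also works. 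Without some such supplement your proof of $(1)\Rightarrow(2)$ is incomplete. (For what it is worth, the paper offers no proof of this proposition, citing Herman--Putnam--Skau, so there is no argument of the paper to compare with; the standard proofs do have to treat the $y_{\max}\mapsto y_{\min}$ transition explicitly, as above.)
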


The preceding proposition and
Theorem~\ref{theequcat2} imply that the full subcategory of
$\mathbf{OBD}_{\mathrm{po}}$ consisting of simple
properly ordered Bratteli diagrams
is equivalent to
the full subcategory of
$\mathbf{SDS}$ (Definition~\ref{defSDS}) consisting of  scaled minimal dynamical systems on
metrizable, compact, totally disconnected  spaces. 


Recall that for a Bratteli diagram $B=(V,E)$, the Bratteli compactum $X_{B}$ is a metrizable, compact, totally disconnected  space.
Thus, to obtain a Cantor set, we need to translate the property
of having no isolated point into the language of diagrams. This is not hard and is done in the next lemma.
By the definition of the topology on $X_{B}$ (Subsection~\ref{subsecv}),
\eqref{lembcan_it2} is equivalent to having no isolated points. Thus, \eqref{lembcan_it1} is equivalent to \eqref{lembcan_it2}.
 Also,
\eqref{lembcan_it3} is just a reformulation of \eqref{lembcan_it2}.

\begin{lemma}\label{lembcan}
Let $B=(V,E)$ be a Bratteli diagram.
The following
statements are equivalent:
\begin{enumerate}
\item\label{lembcan_it1}
$X_{B}$ is homeomorphic to the Cantor set;
\item\label{lembcan_it2}
for each infinite path $x=(e_{1},e_{2},\ldots)$ in $X_{B}$ and each $n\geq 1$
there is an infinite\\
\hspace*{-1.4cm} path $y=(f_{1},f_{2},\ldots)$ with
$x\neq y$ and $e_{k}=f_{k}$, $1\leq k\leq n$;
\item\label{lembcan_it3}
for each
$n\geq 0$ and each  $v\in V_{n}$ there is $m\geq n$ and $w\in V_{m}$
such that there is\\
\hspace*{-1.4cm} a path from $v$ to $w$ and $|s^{-1}(\{w\})|\geq 2$.
\end{enumerate}
\end{lemma}

The next result follows immediately from the lemma above.

\begin{proposition}\label{probcan}
Let $B=(V,E)$ be a simple Bratteli diagram. Then the following
statements are equivalent:
\begin{enumerate}
\item\label{probcan_it1}
$X_{B}$ is homeomorphic to the Cantor set;
\item\label{probcan_it2}
$X_{B}$ is infinite;
\item\label{probcan_it3}
the set $\{n\in\mathbb{N} \mid |E_{n}|\geq 2\}$ is infinite.
\end{enumerate}
\end{proposition}

In this context,
Theorem~\ref{theequcat2} restricts as follows.

\begin{corollary}\label{cormdsc}
The full subcategory of
$\mathbf{OBD}_{\mathrm{po}}$ consisting of simple properly 
ordered Bratteli diagrams $B$ with infinite $X_B$,
is equivalent to the full subcategory of
$\mathbf{SDS}$  consisting of  scaled minimal dynamical systems on
Cantor sets. 
\end{corollary}

\subsection{Factor Maps}\label{subseccms}
\noindent
In this subsection we use the idea of  ordered  premorphism
to construct factor maps between
Cantor minimal systems. For simplicity, we only consider
factor maps on odometers. However,
an example of an extension of the Chacon system
is considered briefly at the end of this subsection
(Example~\ref{exa_Chacon2}). An objective of this subsection
is to illustrate some of our ideas concerning diagrams.
In particular, we reprove some facts on
extensions of odometers by using premorphisms (though there are
also some new results---such as  Proposition~\ref{prop_odo_uni}).
More examples and
 results in this direction can be found in \cite{GH18}.

Consider Example~\ref{exodtoe}.
Applying the
functor $\mathcal{V}$ to the class of the ordered premorphism $f$ in that example,
 we get a factor map $\mathcal{V}([f]): \mathcal{V}(C) \to \mathcal{V}(B)$
as defined before Lemma~\ref{lemtech}. In fact, $\mathcal{V}(B)$ is the maximal rational
equicontinuous factor of $\mathcal{V}(C)$ (see Theorem~\ref{thm_odo}
and \cite{GJ00}).
The idea of the this example can be used to reprove the
fact that every Cantor minimal system has a maximal rational
equicontinuous factor (possibly
trivial). Before showing this, we prove that factor maps are in one-to-one correspondence
to ordered morphisms. First, we recall the following notion.

\begin{definition}\label{defbvm}
Let $(X,\varphi)$ be a Cantor minimal system. By a \emph{Bratteli--Vershik model} for $(X,\varphi)$
we mean a properly ordered Bratteli diagram $B$ such that the associated system $(X_{B},\lambda_{B})$
is conjugate to $(X,\varphi)$. Let $x_{0}\in X$. By a
 \emph{Bratteli--Vershik model} for $(X,\varphi,x_{0})$ we mean
  a properly ordered Bratteli diagram $B$ such that $(X,\varphi,x_{0})$ is pointed topological conjugate to
 $(X_{B},\lambda_{B},x_{\max})$, i.e., there is a homeomorphism
 $\alpha:X\to X_{B}$ such that $\alpha\circ\varphi=\lambda_{B}\circ\alpha$
 and $\alpha(x_{0})=x_{\max}$
 (an isomorphism in $\mathbf{DS}$).
 Note that in the latter case,  $B$ is unique (up to equivalence).
\end{definition}

\begin{proposition}\label{propfb}
Let $(X,\varphi)$ and $(Y,\psi)$ be  Cantor minimal systems, and let $x\in X$ and $y\in Y$.
Let $C$ and $B$ be Bratteli--Vershik models for $(X,\varphi,x)$ and $(Y,\psi,y)$.
The following statements are equivalent:
\begin{enumerate}

\item\label{propfb_it1}
there is a factor map $\alpha: (X,\varphi) \to (Y,\psi)$ with $\alpha(x)=y$;

\item\label{propfb_it2}
there is an (ordered) premorphism $f$ from $B$ to $C$ (see Definition~\ref{defpreobd}).
\end{enumerate}
More precisely, there is a natural one-to-one correspondence between the set of factor maps $\alpha$ as in
\eqref{propfb_it1} and the set of equivalence classes of ordered premorphisms $f$ from $B$ to $C$
given by $\alpha=\mathcal{V}([f])$.
\end{proposition}

\begin{proof}
This  follows  from the fact that
the functor $\mathcal{V}$ is full and faithful (by Theorem~\ref{thrfuncv}).
More precisely, consider the mapping
$[f]\mapsto \mathcal{V}([f])$,
from the set of ordered morphism
from $B$ to $C$, into the set
of factor maps $\alpha: (X,\varphi) \to (Y,\psi)$ with $\alpha(x)=y$.
The fullness and faithfulness of
$\mathcal{V}$ imply respectively that this mapping
is surjective and injective.
This completes the proof.
\end{proof}

\begin{theorem}\label{thmfb}
Let $(X,\varphi)$ and $(Y,\psi)$ be  Cantor minimal systems.
The following statements are equivalent:
\begin{enumerate}

\item\label{thmfb_it1}
there is a factor map from $(X,\varphi)$ to $(Y,\psi)$;

\item\label{thmfb_it2}
there are Bratteli--Vershik models $C$ and $B$ for $(X,\varphi)$ and $(Y,\psi)$, respectively,
such that
there is an ordered premorphism from $B$ to $C$.
\end{enumerate}
\end{theorem}

\begin{proof}
\eqref{thmfb_it1}$\Rightarrow$\eqref{thmfb_it2}:
Suppose that
$\alpha: (X,\varphi) \to (Y,\psi)$
is a factor map. Choose $x\in X$ and
set  $y=\alpha(x)$.
Let  $C$ and $B$ be Bratteli--Vershik models for $(X,\varphi,x)$ and $(Y,\psi,y)$,
respectively (see
Definition~\ref{defbvm}).
Applying Proposition~\ref{propfb},
we get an  ordered  premorphism from $B$ to $C$.

\eqref{thmfb_it2}$\Rightarrow$\eqref{thmfb_it1}:
Let $C$ and $B$ be
 Bratteli--Vershik models  for $(X,\varphi)$ and $(Y,\psi)$, respectively, and
let
  $f\colon B\to C$ be an ordered premorphism.
  Applying the functor $\mathcal{V}$,
  we get a factor map
  $\mathcal{V}([f])\colon
   \mathcal{V}(C) \to \mathcal{V}(B)$
   (see Subsection~\ref{subsecv}).
   Since $(X,\varphi)$ and $(Y,\psi)$
   are respectively conjugate to
   $\mathcal{V}(C)$ and $\mathcal{V}(B)$,
   we obtain a factor
   map from $(X,\varphi)$ to $(Y,\psi)$.
\end{proof}

Let us recall the definition of an odometer, including
the trivial odometers (cf.~\cite{dow05}).

\begin{definition}\label{defodo}
Let $(k_{n})_{n=1}^{\infty}$ be a sequence in $\mathbb{N}$.
By an \emph{odometer of type} $(k_{n})_{n=1}^{\infty}$ we mean
a minimal system $(X,\varphi)$ where
\[
X=\prod_{n=1}^{\infty}\{0,1,\ldots,k_{n}-1\}
\]
and   the homeomorphism $\varphi:X\to X$ is addition of $(1, 0, 0,\ldots)$, with carrying.
\end{definition}

It is known that if $X$ is infinite (i.e., $k_{n}\geq 2$ for infinitely many $n$),
then $(X,\varphi)$ is a Cantor minimal system.
When $X$ is finite, $(X,\varphi)$ is minimal.

The following well-known result follows from Proposition~\ref{propfb}.

\begin{lemma}[\cite{GH18}]\label{lem_facodo}
Let $(X,\varphi)$ and $(Y,\psi)$ be  odometers of types $(k_{n})_{n=1}^{\infty}$ and
$(l_{n})_{n=1}^{\infty}$, respectively.
The following statements are equivalent:
\begin{enumerate}

\item\label{lem_facodo_it1}
$(X,\varphi)$  is a factor of  $(Y,\psi)$;

\item\label{lem_facodo_it2}
for each $n\geq 1$ there is an $m\geq 1$ such that
$k_{1}\cdots k_{n} \mid l_{1}\cdots l_{m}$.
\end{enumerate}
\end{lemma}

The following proposition is part of the literature.

\begin{proposition}\label{prop_conjodo}
Let $(X,\varphi)$ and $(Y,\psi)$ be  odometers of types $(k_{n})_{n=1}^{\infty}$ and
$(l_{n})_{n=1}^{\infty}$, respectively.
The following statements are equivalent:
\begin{enumerate}

\item\label{prop_conjodo_it1}
$(X,\varphi)$  and  $(Y,\psi)$ are conjugate;
\item\label{prop_conjodo_it2}
$(X,\varphi)$  and  $(Y,\psi)$ are orbit equivalent;
\item\label{prop_conjodo_it3}
$(X,\varphi)$  is a factor of  $(Y,\psi)$, and also $(Y,\psi)$ is a factor of $(X,\varphi)$.
\end{enumerate}
\end{proposition}

\begin{proof}
For the equivalence of \eqref{prop_conjodo_it1} and \eqref{prop_conjodo_it2} see, e.g., \cite{pu14}.
The equivalence of \eqref{prop_conjodo_it1} and \eqref{prop_conjodo_it3} follows from Lemma~\ref{lem_facodo}.
\end{proof}

In the following definition we associate an odometer $\mathcal{O}(B)$ to an
ordered Bratteli diagram $B$
and construct an (ordered) premorphism $f_{B}: \mathcal{O}(B)\to B$.

\begin{definition}\label{def_odo_fac}
Let $B=(V,E,\geq)$ be an ordered Bratteli diagram.
We associate to $B$ an odometer $\mathcal{O}(B)=(W,R,\geq)$ of type
$(r_{n})_{n=1}^{\infty}$ and an ordered premorphism $f_{B}: \mathcal{O}(B)\to B$ as follows.
Let $h_{n}$ be the greatest common divisor of the heights of the towers at  level $n$, $n\geq 0$,
and set $r_{n}={h_{n}}/{h_{n-1}}$, $n\geq 1$. More precisely,
write $V=\bigcup_{n=0}^{\infty}V_{n}$ and $E=\bigcup_{n=1}^{\infty}E_{n}$ as in
Definition~\ref{defbd2}. Let  $\mathrm{M}(E_{n})$ denote the multiplicity matrix
of $E_{n}$. Then $E_{0,n}=E_{1}\circ E_{2}\circ\cdots\circ E_{n}$ (the edge set from
$V_{0}$ to $V_{n}$)
is the set of towers at  level $n$, and the  column matrix
\[
\mathrm{M}(E_{0,n})= \mathrm{M}(E_{n})\cdots \mathrm{M}(E_{n-1})\mathrm{M}(E_{1})
=\left(
 \begin{smallmatrix}
 h_{n,1}\\
 h_{n,2}\\
 \vdots\\
 h_{n,k_{n}}
 \end{smallmatrix}
 \right),
\]
where the $h_{n,i}$ are non-zero positive integers and $k_{n}=|V_{n}|$,
consists of the heights of these towers.
Thus, $h_{n}=\mathrm{gcd}(h_{n,1},h_{n,2},\ldots,h_{n,k_{n}})$.
Note that $1=h_{0}\mid h_{1}\mid h_{2}\cdots$
and so the definition of $r_{n}={h_{n}}/{h_{n-1}}$ makes sense.
Let $\mathcal{O}(B)=(W,R,\geq)$  be the odometer of type
$(r_{n})_{n=1}^{\infty}$. Thus $R= \bigcup_{n=1}^{\infty}R_{n}$
and $|R_{n}|=r_{n}$, $n\geq 1$.
Now define $f_{B}: \mathcal{O}(B)\to B$ as follows.
 Set $f_{B}=(F, (n)_{n=0}^{\infty},\geq )$ (see Definition~\ref{defpreobd}), where
 $F= \bigcup_{n=0}^{\infty}F_{n}$ is defined as follows.
 Let $W= \bigcup_{n=0}^{\infty}W_{n}$ denote the set of vertices of $\mathcal{O}(B)$
 and write $W_{n}=\{w_{n} \}$, $n\geq 0$. Also, write
 $V_{n}=\{v^{n}_{1},v^{n}_{2},\ldots,v^{n}_{k_{n}}\}$.
  Set $F_{0}=\{(w_{0},v^{0}_{1})\}$. Thus, $F_{0}$ has only
 one edge going from $w_{0}$ to $v^{0}_{1}$. For $n\geq 1$ set
 \[
 F_{n}=\left\{
 (w_{n},v^{n}_{i},j) \mid 1\leq i \leq k_{n},\ 1\leq j \leq \tfrac{h_{n,i}}{h_{n}}
 \right\}.
 \]
Thus, $F_{n}$ has $\tfrac{h_{n,i}}{h_{n}}$ edges from $w_{n}$ to $v^{n}_{i}$.
Put an arbitrary linear order $\geq$ on these edges. Note that
the  order on $F$ is not important here as any two  orders on
$F$ give equivalent (ordered) premorphisms, since $\mathcal{O}(B)$
has only one vertex at each level.
 Put $f_{B}=(F, (n)_{n=0}^{\infty},\geq )$.
\end{definition}

Observe that $f_{B}=(F, (n)_{n=0}^{\infty},\geq )$ is an ordered premorphism. In fact, we have
  $\mathrm{M}  (F_{n})=\tfrac{1}{h_{n}}\mathrm{M}  (E_{0,n})$
  and $\gcd(\mathrm{M}  (F_{n}))=1$, $n\geq 0$.
  The commutativity condition in Definition~\ref{defpreobd}
amounts to  commutativity of the following diagram, $n=1,2,\ldots$:

\[
\xymatrix{W_{n-1}\ar[rr]^{F_{n-1}}\ar[d]_{R_{n}}
 & &V_{n-1}\ar[d]^{E_{n}}  \\
 W_{n}\ar[rr]_{F_{n}} &
 & V_{n}.
 }
\]
To see this, first note that ordered commutativity and unordered commutativity
of  this diagram coincide as $W_{n-1}$ has only one vertex. We have
\begin{align*}
\mathrm{M}(E_{n})\mathrm{M}(F_{n-1}) &=
\frac{1}{h_{n-1}}\mathrm{M}(E_{n})\mathrm{M}(E_{0,n-1})
=\frac{1}{h_{n-1}}\mathrm{M}(E_{0,n})\\
&=\frac{h_{n}}{h_{n-1}}\mathrm{M}(F_{n})=r_{n}\mathrm{M}(F_{n})\\
&=\mathrm{M}(F_{n}) \mathrm{M}(R_{n}).
\end{align*}

Now, we give an alternative proof for the existence and uniqueness
of the maximal (rational) equicontinuous factor of a Cantor minimal
system (cf.~\cite[Chapter~9]{Aus88}).

\begin{theorem}\label{thm_odo}
For any Cantor minimal system $(X,\varphi)$ there is a unique
(up to conjugacy) odometer $(Y,\psi)$ with the following properties:
\begin{enumerate}
\item\label{thm_odo_it1}
$(Y,\psi)$ is a factor of $(X,\varphi)$;
\item\label{thm_odo_it2}
every odometer which is a factor of $(X,\varphi)$ is also a factor of
$(Y,\psi)$.
\end{enumerate}
Moreover, there is a factor map $\alpha: (X,\varphi) \to (Y,\psi)$
such that, if $\beta: (X,\varphi) \to (Z,\eta)$ is a factor map
onto an odometer $(Z,\eta)$, then
there is a (necessarily unique) factor map $\gamma: (Y,\psi) \to (Z,\eta)$
such that $\beta=\gamma\circ \alpha$, i.e., the following diagram commutes:
\[
\xymatrix{
(X,\varphi) \ar[r]^{\alpha} \ar[rd]_{\beta}
& (Y,\psi) \ar[d]^{\gamma}\\
& (Z,\eta).
}
\]
\end{theorem}

\begin{proof}
First note that uniqueness of $(Y,\psi)$ follows from
Proposition~\ref{prop_conjodo} and \eqref{thm_odo_it2}.
Now let $(X,\varphi)$ be a Cantor minimal system.
We may assume that $(X,\varphi)=\mathcal{V}(B)$ for some
properly ordered Bratteli diagram $B=(V,E,\geq)$.
Let $\mathcal{O}(B)=(W,R,\geq)$ be the ordered Bratteli diagram of
the odometer of type $(r_{n})_{n=1}^{\infty}$ associated to
$B$ as in Definition~\ref{def_odo_fac}. Put
$(Y,\psi)=\mathcal{V}(\mathcal{O}(B))$.
Also, consider the ordered premorphism $f_{B}: \mathcal{O}(B)\to B$
as in Definition~\ref{def_odo_fac} and set
$\alpha=\mathcal{V}(f_{B})$. Thus,
$\alpha: (X,\varphi) \to (Y,\psi)$ is a factor map.
 
Suppose that $\beta: (X,\varphi) \to (Z,\eta)$ is a factor map
onto an odometer $(Z,\eta)$ of type $(s_{n})_{n=1}^{\infty}$.
We may assume that $(Z,\eta)=\mathcal{V}(C)$ for some
properly ordered Bratteli diagram
$C=(U,S,\geq)$, where
$U=\bigcup_{n=0}^{\infty}U_{n}$, $S=\bigcup_{n=1}^{\infty}S_{n}$,
$|U_{n}|=1$ for all $n\geq 0$, and $|S_{n}|=s_{n}$ for all $n\geq 1$.
Since $(Z,\eta)$ is dynamically homogeneous (i.e., for any
$z_{1},z_{2}\in Z$ there is a conjugacy from $(Z,\eta)$ to itself mapping $z_{1}$ to
$z_{2}$; see \cite{GH18}), there is a conjugacy
$\delta: (Z,\eta) \to (Z,\eta)$ such that $\delta(\beta(x_{\min}))=z_{\min}$
where $x_{\min}\in X$ and $z_{\min}\in Z$ are the unique minimal paths.
Consider the map $\tilde{\beta}=\delta\circ\beta:(X,\varphi) \to (Z,\eta)$,
 a factor map with
$\tilde{\beta}(x_{\min})=z_{\min}$.
Since the contravariant functor $\mathcal{V}$ is full (by Theorem~\ref{thrfuncv}---see also
Proposition~\ref{propfb}), there is an ordered premorphism
$g:C\to B$ such that $\mathcal{V}(g)=\tilde{\beta}$.
Write $g=(G, (m_{n})_{n=0}^{\infty},\geq )$
where $G=\bigcup_{n=0}^{\infty}G_{n}$ (see Definition~\ref{defpreobd}).
Consider the diagram of $g$:
 \[
\xymatrix{U_{0}\ar[r]^{S_{1}}\ar[d]_{G_{0}}
 &U_{1}\ar[r]^-{S_{2}}\ar[d]_{G_{1}} &U_{2}\ar[r]\ar[d]_{G_{2}} &\cdots \ \ \  \\
 V_{m_{0}}\ar[r]_{E_{m_{0},m_{1}}}
 &V_{m_{1}}\ar[r]_{E_{m_{1},m_{2}}} &V_{m_{2}}\ar[r]&\cdots \   .
 }
\]
Let us construct an ordered premorphism $h:C\to\mathcal{O}(B)$ such that
$f_{B} h \sim g$. Define $h=(H, (m_{n})_{n=0}^{\infty},\geq )$
where $H=\bigcup_{n=0}^{\infty}H_{n}$ is follows.
Note that, since $H_{n}$ needs to be an edge set from $U_{n}$ to $W_{m_{n}}$
and both these sets have only one vertex, we need only to determine
$t_{n}:=|H_{n}|$, and the order on $H_{n}$ is not important.
Put $t_{0}=0$. Fix $n\geq 1$. Since
$G_{0}\circ E_{0,m_{n}}\cong S_{1}\circ\cdots\circ S_{n}\circ G_{n}$,
we get $\mathrm{M}(E_{0,m_{n}})=\mathrm{M}(G_{n}) \mathrm{M}(S_{1}\circ\cdots\circ S_{n})
=s_{1}\cdots s_{n} \mathrm{M}(G_{n})$.
Taking the $\gcd$ of both sides we get
$h_{m_{n}}=s_{1}\cdots s_{n} \gcd(\mathrm{M}(G_{n}))$, where $h_{m_{n}}$ is as in
Definition~\ref{def_odo_fac}.
(Note that $\mathrm{M}(G_{n})$ is a column matrix.) Put
$t_{n}={h_{m_{n}}}/{s_{1}\cdots s_{n}}$. Observe that $h=(H, (m_{n})_{n=0}^{\infty},\geq )$
thus defined is an ordered premorphism, i.e., the following diagram of $h$ commutes:

 \[
\xymatrix{U_{0}\ar[r]^{S_{1}}\ar[d]_{H_{0}}
 &U_{1}\ar[r]^-{S_{2}}\ar[d]_{H_{1}} &U_{2}\ar[r]\ar[d]_{H_{2}} &\cdots \ \ \  \\
 W_{m_{0}}\ar[r]_{R_{m_{0},m_{1}}}
 &W_{m_{1}}\ar[r]_{R_{m_{1},m_{2}}} &W_{m_{2}}\ar[r]&\cdots \   .
 }
\]
In fact, for any $n\geq 1$ we have
\[
\mathrm{M}(R_{m_{n-1},m_{n}})\mathrm{M}(H_{n-1})=\frac{h_{m_{n}}}{h_{m_{n-1}}}\cdot t_{n-1}
=\frac{h_{m_{n}}}{s_{1}\cdots s_{n}}\cdot s_{n}=\mathrm{M}(H_{n})\mathrm{M}(S_{n}).
\]

Now the premorphisms
$f_{B}h=\left((H_{n}\circ F_{m_{n}})_{n=0}^{\infty}, (m_{n})_{n=0}^{\infty},\geq\right)$ and
 $g=(G, (m_{n})_{n=0}^{\infty},\geq )$ are isomorphic. In fact, for any $n\geq 1$
we have
\[
\mathrm{M}(H_{n}\circ F_{m_{n}})=\mathrm{M}(F_{m_{n}})\mathrm{M}(H_{n})
=\frac{h_{m_{n}}}{s_{1}\cdots s_{n}}\cdot \frac{1}{h_{m_{n}}} \,\mathrm{M}(E_{0,m_{n}})=
\mathrm{M}(G_{n}).
\]
Thus $f_{B}h \cong g$ and so $f_{B}h \sim g$
(see Definition~\ref{defisopre}).
Therefore,
$\mathcal{V}(f_{B}h)=\mathcal{V}(g)$ and so
$\mathcal{V}(h)\circ \alpha=\tilde{\beta}=\delta\circ\beta$. Put
$\gamma=\delta^{-1}\circ \mathcal{V}(h)$.
Then $\gamma: (Y,\psi) \to (Z,\eta)$ is a factor map
with $\beta=\gamma\circ \alpha$.
\end{proof}

The following (new) result gives a criterion for the existence of a factor map
from a Cantor minimal system to an odometer.

\begin{corollary}\label{cor_exi_odo}
Let $(X,\varphi)$ be  a Cantor minimal system  and let $(Y,\psi)$ be the unique odometer of type
$(r_{n})_{n=1}^{\infty}$ associated with $(X,\varphi)$ as in
Theorem~\ref{thm_odo}. For an odometer $(Z,\eta)$
of type $(s_{n})_{n=1}^{\infty}$, the following statements are equivalent:
\begin{enumerate}
\item\label{cor_odo_it1}
$(Z,\eta)$ is a factor of $(X,\varphi)$;
\item\label{cor_odo_it2}
$(Z,\eta)$ is a factor of $(Y,\psi)$;
\item\label{cor_odo_it3}
for each $n\geq 1$ there is an $m\geq 1$ such that
$s_{1}\cdots s_{n} \mid r_{1}\cdots r_{m}$.
\end{enumerate}
\end{corollary}

\begin{proof}
Note that the sequence $(r_{n})_{n=1}^{\infty}$ is defined as in
Definition~\ref{def_odo_fac} in which $B$ is
a Bratteli--Vershik model (unique up to conjugacy)
for $(X,\varphi)$. The equivalence of \eqref{cor_odo_it1} and
\eqref{cor_odo_it2} follows from Theorem~\ref{thm_odo}.
Also, the equivalence of \eqref{cor_odo_it2} and
\eqref{cor_odo_it3} follows from Lemma~\ref{lem_facodo}.
\end{proof}

Finally, we obtain the following new uniqueness result.

\begin{proposition}\label{prop_odo_uni}
Let $(X,\varphi)$ be  a Cantor minimal system  and let $(Z,\eta)$ be an  odometer.
Then there is at most one factor map (up to conjugacy)  from $(X,\varphi)$
to $(Z,\eta)$; that is, if
$\beta_{1}, \beta_{2} : (X,\varphi) \to (Z,\eta)$ are factor maps
then there is a conjugacy $\gamma: (Z,\eta) \to (Z,\eta)$
such that $\beta_{1}= \gamma \circ \beta_{2}$.
\end{proposition}

\begin{proof}
Let $\beta_{1}, \beta_{2} : (X,\varphi) \to (Z,\eta)$ be two factor maps.
Let $(Y,\psi)$ be the unique odometer associated with $(X,\varphi)$ as in
Theorem~\ref{thm_odo}, and let $\alpha:  (X,\varphi) \to (Y,\psi)$
be as in that theorem. By Theorem~\ref{thm_odo}, there are
factor maps
$\gamma_{1}, \gamma_{2}: (Y,\psi) \to (Z,\eta)$
such that $\beta_{i}=\gamma_{i}\circ \alpha$ for $i=1,2$.
If we show that there is a conjugacy $\gamma: (Z,\eta) \to (Z,\eta)$
such that $\gamma_{1}= \gamma \circ \gamma_{2}$,
then it will follow that $\beta_{1}= \gamma \circ \beta_{2}$.

Let $(Y,\psi)$ and $(Z,\eta)$ be of type
$(r_{n})_{n=1}^{\infty}$ and $(s_{n})_{n=1}^{\infty}$, respectively.
We may assume that $(Y,\psi)=\mathcal{V}(D)$ and $(Z,\eta)=\mathcal{V}(C)$ for some
properly ordered Bratteli diagrams 
$D=(W,R,\geq)$ and $C=(U,S,\geq)$ where
$W=\bigcup_{n=0}^{\infty}W_{n}$, $U=\bigcup_{n=0}^{\infty}U_{n}$,
$R=\bigcup_{n=1}^{\infty}R_{n}$,
 $S=\bigcup_{n=1}^{\infty}S_{n}$,
$|W_{n}|=|U_{n}|=1$ for all $n\geq 0$, and $|R_{n}|=r_{n}$ and
$|S_{n}|=s_{n}$ for all $n\geq 1$.
Since $(Z,\eta)$ is dynamically homogeneous (see the proof of Theorem~\ref{thm_odo}),
there are conjugacies
$\delta_{1},\delta_{2}: (Z,\eta) \to (Z,\eta)$ such that
$\delta_{i}(\gamma_{i}(y_{\min}))=z_{\min}$, $i=1,2$,
where $y_{\min}\in Y$ and $z_{\min}\in Z$ are the unique minimal paths.
Since the contravariant functor $\mathcal{V}$ is full (by Theorem~\ref{thrfuncv}),
there are ordered premorphisms
$f,g:C\to D$ such that $\mathcal{V}(f)=\delta_1 \circ \gamma_1$
and $\mathcal{V}(g)=\delta_2 \circ \gamma_2$.
We claim that $f$ is equivalent to $g$.

Write $f=(\bigcup_{n=0}^{\infty}F_{n}, (k_{n})_{n=0}^{\infty},\geq )$ and
$g=(\bigcup_{n=0}^{\infty}G_{n}, (m_{n})_{n=0}^{\infty},\geq )$. Fix $n\geq 1$.
By symmetry, we may assume that $m_n \geq k_n$.
Consider the following (a priori non-commutative) diagram:

 \[
\xymatrix{U_{0}\ar[r]^{S_{0,n}}\ar[d]_{F_{0}=G_{0}}
 &U_{n}\ar[rd]^-{G_{n}}\ar[d]_{F_{n}}  \ \ \  \\
 W_{0}\ar[r]_{R_{0},k_{n}}
 &W_{k_{n}}\ar[r]_{R_{k_{n},m_{n}}} &W_{m_{n}} \   .
 }
\]
Let us show that the triangle in this diagram commutes.
(The square clearly commutes.)
Since $f$ is a premorphism, we have
$\mathrm{M}(F_{n})\mathrm{M}(S_{0,n})=\mathrm{M}(R_{0,k_{n}})$.
Thus, $\mathrm{M}(F_{n})={r_{1}\cdots r_{k_{n}}}/{s_{1}\cdots s_{n}}$
(as a $1\times 1$ matrix). Similarly,
since $g$ is a premorphism, we get
$\mathrm{M}(G_{n})={r_{1}\cdots r_{m_{n}}}/{s_{1}\cdots s_{n}}$.
Hence,
\[
 \mathrm{M}(R_{k_{n},m_{n}}) \mathrm{M}(F_{n})=
 \frac{r_{1}\cdots r_{m_{n}}}{r_{1}\cdots r_{k_{n}}}\cdot \mathrm{M}(F_{n})=
 \frac{r_{1}\cdots r_{m_{n}}}{s_{1}\cdots s_{n}}=
\mathrm{M}(G_{n}).
\]
Using Definition~\ref{defeq2}, $f$ is equivalent to $g$.
Hence,
$\delta_1 \circ \gamma_1=\mathcal{V}(f)=\mathcal{V}(g)=\delta_2 \circ \gamma_2$.
Put $\gamma= \delta_{1}^{-1} \circ \delta_2$ which is a
conjugacy from $(Z,\eta)$ to itself.
Then $\gamma_{1}= \gamma \circ \gamma_{2}$,
and the proof is complete.
\end{proof}

We note that Theorem~\ref{thm_odo}, Corollary~\ref{cor_exi_odo}, and
Proposition~\ref{prop_odo_uni} hold also for
essentially
minimal totally disconnected dynamical systems (Definition~\ref{defcatds})
which are slightly more general than Cantor minimal systems (the same proofs work).

\begin{figure}
\begin{center}
\begin{tikzpicture}[scale=1.5]

\filldraw (12,10) circle [radius=0.1];
\filldraw (10,7) circle [radius=0.1];
\filldraw (12,7) circle [radius=0.1];
\filldraw (14,7) circle [radius=0.1];
\filldraw (10,4) circle [radius=0.1];
\filldraw (12,4) circle [radius=0.1];
\filldraw (14,4) circle [radius=0.1];
\filldraw (10,1) circle [radius=0.1];
\filldraw (12,1) circle [radius=0.1];
\filldraw (14,1) circle [radius=0.1];

\draw (11.85,9.9)--(9.95,7.12);
\draw (11.9,9.68)--(10.13,7.05);

\draw (11.9,9.68)--(11.9,7.12);
\draw (12.1,9.68)--(12.1,7.12);

\draw (12.1,9.68)--(13.87,7.05);
\draw (12.15,9.9)--(14.05,7.12);

\node at (10.85,8.6) {\tiny{1}};
\node at (11.15,8.4) {\tiny{5}};
\node at (10.96,8.51) {.};
\node at (11,8.48) {.};
\node at (11.04,8.45) {.};

\node at (11.83,8.3) {\tiny{1}};
\node at (12.17,8.3) {\tiny{9}};
\node at (11.95,8.3) {.};
\node at (12,8.3) {.};
\node at (12.05,8.3) {.};

\node at (13,8.2) {\tiny{1}};
\node at (13.28,8.43) {\tiny{13}};
\node at (13.08,8.28) {.};
\node at (13.12,8.32) {.};
\node at (13.16,8.36) {.};

\draw (10,6.87)--(10,4.14);
\draw (12,6.87)--(12,4.14);
\draw (13.98,6.87)--(13.98,4.14);
\draw (14.06,6.87)--(14.06,4.14);
\draw (11.97,6.85)--(10.08,4.1);
\draw (13.95,6.85)--(12.08,4.1);
\draw (12.02,6.85)--(13.96,4.14);
\draw (10.1,6.9)--(11.91,4.1);
\draw (10.15,6.95)--(13.94,4.14);

\draw (10,3.87)--(10,1.14);
\draw (12,3.87)--(12,1.14);
\draw (13.98,3.87)--(13.98,1.14);
\draw (14.06,3.87)--(14.06,1.14);
\draw (11.97,3.85)--(10.08,1.1);
\draw (13.95,3.85)--(12.08,1.1);
\draw (12.02,3.85)--(13.96,1.14);
\draw (10.1,3.9)--(11.91,1.1);
\draw (10.15,3.95)--(13.94,1.14);

\node at (9.92,4.35) {\tiny{1}};
\node at (10.15,4.35) {\tiny{2}};
\node at (11.78,4.4) {\tiny{1}};
\node at (11.93,4.37) {\tiny{2}};
\node at (12.15,4.35) {\tiny{3}};
\node at (13.53,4.52) {\tiny{1}};
\node at (13.77,4.52) {\tiny{2}};
\node at (13.91,4.52) {\tiny{3}};
\node at (14.15,4.52) {\tiny{4}};

\node at (9.92,1.35) {\tiny{1}};
\node at (10.15,1.35) {\tiny{2}};
\node at (11.79,1.4) {\tiny{1}};
\node at (11.93,1.37) {\tiny{2}};
\node at (12.15,1.35) {\tiny{3}};
\node at (13.53,1.52) {\tiny{1}};
\node at (13.77,1.52) {\tiny{2}};
\node at (13.91,1.52) {\tiny{3}};
\node at (14.15,1.52) {\tiny{4}};

\filldraw (6,10) circle [radius=0.1];
 \filldraw (5,7) circle [radius=0.1];
\filldraw (7,7) circle [radius=0.1];
 \filldraw (5,4) circle [radius=0.1];
\filldraw (7,4) circle [radius=0.1];
 \filldraw (5,1) circle [radius=0.1];
\filldraw (7,1) circle [radius=0.1];

\draw (5.91,9.92)--(4.96,7.12);
\draw (5.98,9.88)--(5.05,7.13);

\draw (6.07,9.91)--(6.97,7.12);

\draw (4.95,6.87)--(4.95,4.14);
\draw (4.95,3.87)--(4.95,1.14);
\draw (5.03,6.87)--(5.03,4.14);
\draw (5.03,3.87)--(5.03,1.14);

\draw (6.95,6.87)--(6.95,4.14);
\draw (6.95,3.87)--(6.95,1.14);
\draw (7.03,6.87)--(7.03,4.14);
\draw (7.03,3.87)--(7.03,1.14);

\draw (6.9,6.9)--(5.05,4.15);
\draw (6.9,3.9)--(5.05,1.15);
\draw (5.1,6.9)--(6.9,4.15);
\draw (5.1,3.9)--(6.9,1.15);


\node at (5.06,7.7) {\tiny{1}};
\node at (5.32,7.7) {\tiny{2}};

\node at (7.11,4.39) {\tiny{3}};
\node at (6.89,4.39) {\tiny{2}};
\node at (6.66,4.39) {\tiny{1}};
\node at (5.12,4.7) {\tiny{2}};
\node at (4.89,4.7) {\tiny{1}};
\node at (5.33,4.7) {\tiny{3}};
\node at (7.11,1.39) {\tiny{3}};
\node at (6.89,1.39) {\tiny{2}};
\node at (6.66,1.39) {\tiny{1}};
\node at (5.12,1.7) {\tiny{2}};
\node at (4.89,1.7) {\tiny{1}};
\node at (5.33,1.7) {\tiny{3}};

\draw[->, thick] (6.1,10.1) [out=10, in=170] to (11.9,10.1);

\draw[->, thick] (5.15,7) [out=10, in=170] to (9.85,7);
\draw[->, thick] (5.15,7.05) [out=10, in=170] to (9.85,7.1);
\draw[->, thick] (7.15,6.95) [out=-10, in=190] to (9.85,6.88);

\draw[->, thick] (5.15,7.16) [out=20, in=156] to (11.85,7.24);
\draw[->, thick] (5.15,7.12) [out=20, in=158] to (11.85,7.12);
\draw[->, thick] (5.15,7.08) [out=20, in=160] to (11.85,7);
\draw[->, thick] (7.15,6.9) [out=-15, in=200] to (11.85,6.94);
\draw[->, thick] (7.15,6.86) [out=-15, in=205] to (11.85,6.85);
\draw[->, thick] (7.15,6.83) [out=-15, in=210] to (11.85,6.76);

\draw[->, thick] (5.15,7.32) [out=30, in=154] to (13.66,7.3);
\draw[->, thick] (5.15,7.28) [out=30, in=156] to (13.66,7.18);
\draw[->, thick] (5.15,7.24) [out=30, in=158] to (13.66,7.06);
\draw[->, thick] (5.15,7.2) [out=30, in=160] to (13.66,6.94);
\draw[->, thick] (7.15,6.8) [out=-30, in=200] to (13.66,6.9);
\draw[->, thick] (7.15,6.77) [out=-30, in=204] to (13.66,6.8);
\draw[->, thick] (7.15,6.74) [out=-30, in=208] to (13.66,6.7);
\draw[->, thick] (7.15,6.71) [out=-30, in=212] to (13.66,6.6);
\draw[->, thick] (7.15,6.68) [out=-30, in=216] to (13.66,6.5);

\draw[->, thick] (5.15,4) [out=10, in=170] to (9.85,4);
\draw[->, thick] (5.15,4.05) [out=10, in=170] to (9.85,4.1);
\draw[->, thick] (7.15,3.95) [out=-10, in=190] to (9.85,3.88);

\draw[->, thick] (5.15,4.16) [out=20, in=156] to (11.78,4.24);
\draw[->, thick] (5.15,4.12) [out=20, in=158] to (11.85,4.12);
\draw[->, thick] (5.15,4.08) [out=20, in=160] to (11.85,4);
\draw[->, thick] (7.15,3.9) [out=-15, in=200] to (11.85,3.94);
\draw[->, thick] (7.15,3.86) [out=-15, in=205] to (11.85,3.85);
\draw[->, thick] (7.15,3.83) [out=-15, in=210] to (11.85,3.76);

\draw[->, thick] (5.15,4.32) [out=30, in=154] to (13.66,4.3);
\draw[->, thick] (5.15,4.28) [out=30, in=156] to (13.66,4.18);
\draw[->, thick] (5.15,4.24) [out=30, in=158] to (13.66,4.06);
\draw[->, thick] (5.15,4.2) [out=30, in=160] to (13.66,3.94);
\draw[->, thick] (7.15,3.8) [out=-30, in=200] to (13.66,3.9);
\draw[->, thick] (7.15,3.77) [out=-30, in=204] to (13.66,3.8);
\draw[->, thick] (7.15,3.74) [out=-30, in=208] to (13.66,3.7);
\draw[->, thick] (7.15,3.71) [out=-30, in=212] to (13.66,3.6);
\draw[->, thick] (7.15,3.68) [out=-30, in=216] to (13.66,3.5);

\draw[->, thick] (5.15,1) [out=10, in=170] to (9.85,1);
\draw[->, thick] (5.15,1.05) [out=10, in=170] to (9.85,1.1);
\draw[->, thick] (7.15,0.95) [out=-10, in=190] to (9.85,0.88);

\draw[->, thick] (5.15,1.16) [out=20, in=156] to (11.78,1.24);
\draw[->, thick] (5.15,1.12) [out=20, in=158] to (11.85,1.12);
\draw[->, thick] (5.15,1.08) [out=20, in=160] to (11.85,1);
\draw[->, thick] (7.15,0.9) [out=-15, in=200] to (11.85,0.94);
\draw[->, thick] (7.15,0.86) [out=-15, in=205] to (11.85,0.85);
\draw[->, thick] (7.15,0.83) [out=-15, in=210] to (11.85,0.76);

\draw[->, thick] (5.15,1.32) [out=30, in=154] to (13.66,1.3);
\draw[->, thick] (5.15,1.28) [out=30, in=156] to (13.66,1.18);
\draw[->, thick] (5.15,1.24) [out=30, in=158] to (13.66,1.06);
\draw[->, thick] (5.15,1.2) [out=30, in=160] to (13.66,0.94);
\draw[->, thick] (7.15,0.8) [out=-30, in=200] to (13.66,0.9);
\draw[->, thick] (7.15,0.77) [out=-30, in=204] to (13.66,0.8);
\draw[->, thick] (7.15,0.74) [out=-30, in=208] to (13.66,0.7);
\draw[->, thick] (7.15,0.71) [out=-30, in=212] to (13.66,0.6);
\draw[->, thick] (7.15,0.68) [out=-30, in=216] to (13.66,0.5);

\node at (9.6,7.22) {\tiny{1}};
\node at (9.6,6.96) {\tiny{2}};
\node at (9.6,6.75) {\tiny{3}};

\node at (11.5,7.47) {\tiny{1}};
\node at (11.5,7.27) {\tiny{2}};
\node at (11.5,7.1) {\tiny{4}};
\node at (11.5,6.9) {\tiny{3}};
\node at (11.5,6.72) {\tiny{5}};
\node at (11.5,6.5) {\tiny{6}};

\node at (13.28,7.57) {\tiny{1}};
\node at (13.28,7.38) {\tiny{2}};
\node at (13.28,7.25) {\tiny{4}};
\node at (13.28,7.1) {\tiny{7}};
\node at (13.28,6.85) {\tiny{3}};
\node at (13.38,6.7) {\tiny{5}};
\node at (13.28,6.54) {\tiny{6}};
\node at (13.38,6.43) {\tiny{8}};
\node at (13.38,6.21) {\tiny{9}};

\node at (9.6,4.22) {\tiny{1}};
\node at (9.6,3.96) {\tiny{2}};
\node at (9.6,3.75) {\tiny{3}};

\node at (11.5,4.45) {\tiny{1}};
\node at (11.5,4.27) {\tiny{2}};
\node at (11.5,4.1) {\tiny{4}};
\node at (11.5,3.9) {\tiny{3}};
\node at (11.5,3.72) {\tiny{5}};
\node at (11.5,3.5) {\tiny{6}};

\node at (13.2,4.57) {\tiny{1}};
\node at (13.28,4.38) {\tiny{2}};
\node at (13.28,4.25) {\tiny{4}};
\node at (13.28,4.1) {\tiny{7}};
\node at (13.28,3.85) {\tiny{3}};
\node at (13.38,3.7) {\tiny{5}};
\node at (13.28,3.54) {\tiny{6}};
\node at (13.38,3.43) {\tiny{8}};
\node at (13.38,3.21) {\tiny{9}};

\node at (9.6,1.22) {\tiny{1}};
\node at (9.6,0.96) {\tiny{2}};
\node at (9.6,0.75) {\tiny{3}};

\node at (11.5,1.45) {\tiny{1}};
\node at (11.5,1.27) {\tiny{2}};
\node at (11.5,1.1) {\tiny{4}};
\node at (11.5,0.9) {\tiny{3}};
\node at (11.5,0.72) {\tiny{5}};
\node at (11.5,0.5) {\tiny{6}};

\node at (13.2,1.57) {\tiny{1}};
\node at (13.28,1.38) {\tiny{2}};
\node at (13.28,1.25) {\tiny{4}};
\node at (13.28,1.1) {\tiny{7}};
\node at (13.28,0.85) {\tiny{3}};
\node at (13.38,0.7) {\tiny{5}};
\node at (13.28,0.54) {\tiny{6}};
\node at (13.38,0.43) {\tiny{8}};
\node at (13.38,0.21) {\tiny{9}};

\node[very thick] at (6,0.2) {\vdots};
\node[very thick] at (12,-0.5) {\vdots};
\node at (6,10.7) {$C=(W,S,\geq)$};
\node at (12,10.7) {$B'=(V',E',\geq')$};
\node at (9,10.9) {$g$};
\draw[->] (7,10.7) to (10.8,10.7);
\end{tikzpicture}
\end{center}
\caption{An ordered premorphism $g$ from $C$
to $B'$. See Example~\ref{exa_Chacon2}.}\label{fig_Chacon2}
\end{figure}

Next let us  give an
illustrative example of two ordered
premorphisms which are inverses of each other.
This shows that ordered premorphisms can also be
used to verify conjugacy between Cantor minimal systems.
This also gives an alternative proof of the fact that
the  diagrams in Figure~\ref{fig_Chacon} are equivalent
(as already mentioned
 in \cite[Section~4.2]{GJ00}.)

\begin{example}\label{exa_Chacon2}
Let $(X,\varphi)$ (the Chacon system) and $C=(W,S,\geq)$ be as in Example~\ref{exa_Chacon}.
The diagram $C$ is
drawn on the left in  Figure~\ref{fig_Chacon2}. Let
$B=(V,E,\geq)$ be the properly ordered Bratteli diagram drawn on the left in Figure~\ref{fig_Chacon}
and let
 $B'=(V',E',\geq')$ be the  telescoping  to the sequence $0,3,4,5\ldots$ of  $B$.
The diagram $B'$ is drawn on the right in Figure~\ref{fig_Chacon2}.
It can be checked easily that $g:C\to B'$ in Figure~\ref{fig_Chacon2} is an ordered premorphism,
i.e., the ordered commutativity required in Definition~\ref{defpreobd} holds.
Write $g=(G, (n)_{n=0}^{\infty},\geq)$. Then
the multiplicity matrices
are the following:
\[
\mathrm{M}(E_{1}')
=\left(
 \begin{smallmatrix}
 5  \\
 9 \\
 13
 \end{smallmatrix}
 \right), \
\mathrm{M}(E_{n}')
=\left(
 \begin{smallmatrix}
 1 & 1 & 0 \\
 1 & 1 & 1\\
 1 & 1 & 2
 \end{smallmatrix}
 \right), \
 \mathrm{M}(S_{1})
=\left(
 \begin{smallmatrix}
 2  \\
 1
 \end{smallmatrix}
 \right), \
\mathrm{M}(S_{n})
=\left(
 \begin{smallmatrix}
 2 & 1  \\
 1 & 2
 \end{smallmatrix}
 \right),\  \text{for}\ n\geq 2;
 \]
 \[
  \mathrm{M}(F_{0})=(1),\
 \mathrm{M}(F_{n})
=\left(
 \begin{smallmatrix}
 2 & 1  \\
 3 & 3 \\
 4 & 5
 \end{smallmatrix}
 \right),
 \  \text{for}\ n\geq 1.
\]
Note that $g$ can also be considered as an ordered premorphism from
$C$ to $B$, in which  case we have $g=(G, (g_{n})_{n=0}^{\infty},\geq)$
where $g_0=0$ and $g_n=n+2$ for $n\geq 1$.
Also, $f$ in Example~\ref{exa_Chacon} can considered as an ordered
premorphism from $B$ to $C$ and in this case we can write
$f=(F, (f_{n})_{n=0}^{\infty},\geq)$,
where $f_0=0$ and $f_n=n+1$ for $n\geq 1$.
With this in mind, the compositions $fg: C\to C$ and
$gf:B\to B$ make sense.
It is easy to check that $fg\sim \mathrm{id}_{C}$ using
 Definition~\ref{defeq2}  for equivalence of ordered premorphisms
in the second sense. Applying the functor $\mathcal{V}$ we get
$\mathcal{V}([g]) \circ \mathcal{V}([f])= \mathrm{id}_{X}$.
Note that $\mathcal{V}([f])$ is surjective
(since factor maps between minimal systems are surjective).
It follows that $\mathcal{V}([f])$ is a conjugacy
and that $gf\sim \mathrm{id}_{B}$. In particular,
$B$ and $C$ are equivalent.
A dynamical  argument for this  can be obtained by using the fact that
the Chacon system $(X,\varphi)$ is topologically prime---i.e.,
it has no non-trivial factor.
\end{example}

\section{Weak Orbit Equivalence and C*-Algebras}\label{secwoe}
In this section we give an equivalent condition in terms of C*-algebras
for  weak orbit equivalence.

Let $(X,\varphi)$ and $(Y,\psi)$ be Cantor minimal systems. Recall
from \cite{gw95} that these systems are
\emph{weakly orbit equivalent} if there exists a homeomorphism $\alpha$ in
$[\varphi]$ such that the system $(X,\alpha)$ admits $(Y,\psi)$ as a factor, and
there exists a homeomorphism $\beta$ in
$[\psi]$ such that  $(Y,\beta)$ admits $(X,\varphi)$ as a factor.
(Here, $[\varphi]$ denotes the full group of $(X,\varphi)$; see \cite{gw95}.)
Two simple dimension groups with order unit, $G$ and $H$, are called
\emph{weakly isomorphic} if there exist order and order unit preserving group homomorphisms
from $G$ into $H$ and from $H$ to $G$.

For a C*-algebra $A$ let us denote by $\mathrm{T}(A)$  the set of tracial
states on $A$. When
$\mathrm{T}(A)\neq \emptyset$, there is a natural pairing
$\rho_{A}: \mathrm{K}_{0}(A)\to \mathrm{Aff}(\mathrm{T}(A))$
defined by $\rho_{A}([p])(\tau)=\tau(p)$ for all $[p]\in \mathrm{K}_{0}(A)$
and $\tau \in \mathrm{T}(A)$.
In the next result we have used the notion of UCT class. We refer the reader to
\cite[Definition~2.4.5]{Ro02} for the definition
and details.

\begin{theorem}\label{thmwoec}
Let $(X,\varphi)$ and $(Y,\psi)$ be Cantor minimal systems and set
$\mathrm{C}(X)\rtimes_{\varphi}\mathbb{Z}=A$ and
$\mathrm{C}(Y)\rtimes_{\psi}\mathbb{Z}=B$.
The following statements are equivalent:
\begin{enumerate}

\item\label{thmwoec_it1}
$(X,\varphi)$ and $(Y,\psi)$ are weakly orbit equivalent;

\item\label{thmwoec_it2}
there exists a positive homomorphism
from $\rho_{A}(\mathrm{K}_{0}(A))$ to $\rho_{B}(\mathrm{K}_{0}(B))$, mapping
 $\rho_{A}([1_{A}])$ to $\rho_{B}([1_{B}])$, and  one from
$\rho_{B}(\mathrm{K}_{0}(B))$ to $\rho_{A}(\mathrm{K}_{0}(A))$, mapping
$\rho_{B}([1_{B}])$ to $\rho_{A}([1_{A}])$;

\item\label{thmwoec_it3}
there are simple unital $A\mathbb{T}$ algebras  $C, D$ of real rank zero with $\mathrm{K}_{1}$ equal
to $\mathbb{Z}$ and $\mathrm{K}_{0}$ not equal to $\mathbb{Z}$ which are tracially equivalent to
$A$ and  $B$, respectively,
and there are unital  $*$-homomorphisms
from $C$ to $D$ and from $D$ to $C$;

\item\label{thmwoec_it4}
there are  separable simple unital C*-algebras $C, D$ with tracial rank zero which are tracially equivalent to
$A$ and  $B$, respectively,
and there are unital  $*$-homomorphisms
from $C$ to $D$ and from $D$ to $C$.
\end{enumerate}
Moreover,
in \eqref{thmwoec_it3} (and in \eqref{thmwoec_it4}, if we further assume that $C,D$ are
in the UCT class)
 we can replace the existence of  $*$-homomorphisms with the existence
of positive homomorphisms $\alpha : \mathrm{K}_{0}(C)\to \mathrm{K}_{0}(D)$ and
$\beta : \mathrm{K}_{0}(D)\to \mathrm{K}_{0}(C)$ such that
$\alpha([1_A])=[1_{B}]$ and $\beta([1_B])=[1_{A}]$ and that $\alpha,\beta$ preserve
the infinitesimal subgroups.
Also, we can choose $C, D$ in \eqref{thmwoec_it3} in such a way that the infinitesimal subgroups of
$\mathrm{K}_{0}(C)$ and $\mathrm{K}_{0}(D)$ are trivial.
\end{theorem}

\begin{proof}
First note that for any unital exact C*-algebra $A$ we have $\ker \rho_{A}=\mathrm{Inf}(\mathrm{K}_{0}(A))$, where
$\rho_{A}: \mathrm{K}_{0}(A)\to \mathrm{Aff}(T(A))$ is the natural pairing.

 \eqref{thmwoec_it1}$\Leftrightarrow$\eqref{thmwoec_it2}:
 This  follows from
 \cite[Theorem~2.3]{gw95}   and the relation between the
 K-theory of a Cantor minimal system and of the associated crossed product. In fact, let $A$ be as in the statement.
Then $ \mathrm{K}_{0}(A)/\mathrm{Inf}(\mathrm{K}_{0}(A))\cong \rho_{A}(\mathrm{K}_{0}(A))$ as
dimension groups with order unit, where the latter group is considered with the positive cone
$\rho_{A}(\mathrm{K}_{0}(A)^{+})$ and order unit $\rho_{A}([1_{A}])$.
On the other hand, $\mathrm{K}^{0}(X,\varphi)\cong \mathrm{K}_{0}(A)$ as dimension groups with order unit.
Thus,
\[
\rho_{A}(\mathrm{K}_{0}(A))\cong
\dfrac{\mathrm{K}^{0}(X,\varphi)}{\mathrm{Inf}\left(\mathrm{K}^{0}(X,\varphi)\right)}
\]
as
dimension groups with order unit. An analogous result holds for $B$. Now,
\cite[Theorem~2.3]{gw95}
implies that \eqref{thmwoec_it1} and \eqref{thmwoec_it2} are equivalent.

 \eqref{thmwoec_it1}$\Leftrightarrow$\eqref{thmwoec_it3}:
 There is a Cantor minimal system $(Z,\phi)$ such
 that
 \[
\mathrm{K}^{0}(Z,\phi)\cong \dfrac{\mathrm{K}^{0}(X,\varphi)}{\mathrm{Inf}\left(\mathrm{K}^{0}(X,\varphi)\right)},
\]
 as  dimension groups with order unit
(see \cite{pu89, hps92, gps95}). We may assume that $Z=X$. Indeed, let
 $h:X\to Z$ be a homeomorphism. Then $T=h^{-1}\phi h$ is a homeomorphism of $X$ and
 $h:(X,T)\to (Z,\phi)$ is a conjugacy. So
 $\mathrm{K}^{0}(X,T)\cong \mathrm{K}^{0}(Z,\phi)\cong
 \mathrm{K}^{0}(X,\varphi)/\mathrm{Inf}\left(\mathrm{K}^{0}(X,\varphi)\right)$ as
  dimension groups with order unit.
  Note that $\mathrm{Inf}\left(\mathrm{K}^{0}(X,T)\right)=0$. Then by
  \cite[Theorem~2.3]{gw95}, the systems
  $(X,\varphi)$ and $(X,T)$ are weakly orbit equivalent.
  Set $C=\mathrm{C}(X)\rtimes_{T}\mathbb{Z}$. Then by \cite[Theorem~4.2]{li05},
  $A$ and $C$ are tracially equivalent.
Similarly, there is a minimal homeomorphism $S$ of $Y$ such that
\[
\mathrm{K}^{0}(Y,S)\cong \dfrac{\mathrm{K}^{0}(Y,\psi)}{\mathrm{Inf}\left(\mathrm{K}^{0}(Y,\psi)\right)}.
\]
  Set $D=\mathrm{C}(Y)\rtimes_{S}\mathbb{Z}$. Thus, $B$ and $D$ are tracially equivalent. Note that
  $C$ and $D$ are simple unital $A\mathbb{T}$ algebras  of real rank zero with $\mathrm{K}_{1}$ equal
to $\mathbb{Z}$  and $\mathrm{K}_{0}$ not equal to $\mathbb{Z}$.
Since $(X,T)$ and $(Y,S)$ are weakly orbit equivalent, by
\cite[Theorem~2.3]{gw95} there exist positive
unital homomorphisms (i.e., morphisms in the category $\mathbf{DG}_{1}$)
$\alpha: \mathrm{K}_{0}(C)\to  \mathrm{K}_{0}(D)$ and
$\beta : \mathrm{K}_{0}(D)\to  \mathrm{K}_{0}(C)$. Note that $C$ and $D$ are
TAF~algebras and so
by \cite{da04}, there are unital $*$-homomorphisms $f: C\to D$ and $g:D\to C$ such that
$\mathrm{K}_{0}(f)=\alpha$ and $\mathrm{K}_{0}(g)=\beta$.

 \eqref{thmwoec_it3}$\Leftrightarrow$\eqref{thmwoec_it4}:
This follows from the fact that if $C$ is a simple unital $A\mathbb{T}$~algebra
of real rank zero with $\mathrm{K}_{1}$ equal
to $\mathbb{Z}$ and $\mathrm{K}_{0}$ not equal to $\mathbb{Z}$ then $C$ is a TAF~algebra.
In fact, by \cite[Theorem~1.15]{gps95}, there is a Cantor minimal system
$(Z,\phi)$ such that $C\cong \mathrm{C}(Z)\rtimes_{\phi}\mathbb{Z}$. By \cite{li05}, such an algebra
is a TAF~algebra.

 \eqref{thmwoec_it4}$\Leftrightarrow$\eqref{thmwoec_it2}:
Since $A$ is tracially equivalent to $C$, by \cite[Theorem~3.4]{li05} there is an order isomorphism
from
$\rho_{A}(\mathrm{K}_{0}(A))$ onto $\rho_{C}(\mathrm{K}_{0}(C))$ which maps
$\rho_{A}([1_{A}])$ to $\rho_{C}([1_{C}])$. Similarly, there is an order isomorphism
from
$\rho_{B}(\mathrm{K}_{0}(B))$ onto $\rho_{D}(\mathrm{K}_{0}(D))$ which maps
$\rho_{B}([1_{B}])$ to $\rho_{D}([1_{D}])$.
Now
let $f: C\to D$ and $g:D\to C$ be unital $*$-homomorphisms
as in \eqref{thmwoec_it4}. Then we get ordered group homomorphisms
$\mathrm{K}_{0}(f): \mathrm{K}_{0}(C)\to \mathrm{K}_{0}(D)$ and
$\mathrm{K}_{0}(g): \mathrm{K}_{0}(D)\to \mathrm{K}_{0}(C)$ which induce
ordered group homomorphisms from
$\rho_{C}(\mathrm{K}_{0}(C))$ to $\rho_{D}(\mathrm{K}_{0}(D))$ mapping
$\rho_{C}([1_{C}])$ to $\rho_{D}([1_{D}])$
and from
$\rho_{D}(\mathrm{K}_{0}(D))$ to $\rho_{C}(\mathrm{K}_{0}(C))$ mapping
$\rho_{D}([1_{D}])$ to $\rho_{C}([1_{C}])$.
By composing the appropriate maps we obtain
ordered group homomorphisms
from $\rho_{A}(\mathrm{K}_{0}(A))$ to $\rho_{B}(\mathrm{K}_{0}(B))$ mapping
 $\rho_{A}([1_{A}])$ to $\rho_{B}([1_{B}])$, and  from
$\rho_{B}(\mathrm{K}_{0}(B))$ to $\rho_{A}(\mathrm{K}_{0}(A))$ mapping
$\rho_{B}([1_{B}])$ to $\rho_{A}([1_{A}])$. Thus \eqref{thmwoec_it2} holds.

Observe that in \eqref{thmwoec_it3} and \eqref{thmwoec_it4} we may replace the
existence of unital $*$-homomorphisms with (unital)
maps between the $\mathrm{K}_0$-groups. This is  because the C*-algebras in question are separable simple unital
TAF~algebras in the UCT class and (by \cite{da04}) one can lift  unital positive homomorphisms between
 the $\mathrm{K}_0$-groups to unital $*$-homomorphisms
between the corresponding C*-algebras.
\end{proof}

\section*{Acknowledgements}
The third author thanks Maryam Hosseini for helpful discussions and for drawing
our attention to the article \cite{su11}.
The authors thank
Huaxin Lin and  Ian Putnam for helpful comments. The first and third authors were in part supported by a grant from IPM (No. 96430215 \& 94470072).
The research of the second author was supported by the Natural Sciences
and Engineering Research Council of Canada, and by the Fields Institute.
The research of the second author was supported by the Natural Sciences
and Engineering Research Council of Canada.

\end{document}